\newtheorem{theorem}{Theorem}[section]
\newtheorem{lemma}[theorem]{Lemma}
\newtheorem{proposition}[theorem]{Proposition}
\newtheorem{corollary}[theorem]{Corollary}
\newtheorem{conjecture}[theorem]{Conjecture}
\newtheorem*{definition}{Definition}
\newcommand{\R}{{\mathbb R}}
\title[Neumann eigenvalue sums on triangles]{Neumann eigenvalue sums on triangles are (mostly) minimal for equilaterals}
\author{R. S. Laugesen, Z. C. Pan and S. S. Son}
\address{Department of Mathematics, University of Illinois, Urbana,
IL 61801, U.S.A.} \email{Laugesen\@@illinois.edu,pan22\@@illinois.edu,son9\@@illinois.edu}
\date{\today}
\keywords{Isodiametric, isoperimetric, free membrane.}
\subjclass[2000]{Primary 35P15. Secondary 35J20.}
\begin{document}

\maketitle

\begin{abstract}
We prove that among all triangles of given diameter, the equilateral triangle minimizes the sum of the first $n$ eigenvalues of the Neumann Laplacian, when $n \geq 3$.

The result fails for $n=2$, because the second eigenvalue is known to be minimal for the degenerate acute isosceles triangle (rather than for the equilateral) while the first eigenvalue is $0$ for every triangle. We show the third eigenvalue is minimal for the equilateral triangle.
\end{abstract}

\section{Results}
\label{results_sec}

Eigenfunctions of the Neumann Laplacian satisfy $-\Delta u = \mu u$ with natural boundary condition $\frac{\partial u}{\partial n} = 0$, and the eigenvalues $\mu_j$ satisfy
\[
 0 = \mu_1 < \mu_2 \le \mu_3 \le \dots \to \infty.
\]
We prove a geometrically sharp lower bound on sums of Neumann eigenvalues on triangular domains, under normalization of the diameter.
\begin{theorem} \label{main}
Among all triangular domains of given diameter, the equilateral triangle minimizes the sum of the first $n$ eigenvalues of the Neumann Laplacian, when $n \geq 3$.

That is, if $T$ is a triangular domain, $E$ is equilateral, and $n \geq 3$, then \[
(\mu_2 + \dots + \mu_n)D^2 \big|_T \geq (\mu_2 + \dots + \mu_n)D^2 \big|_E
\]
with equality if and only if T is equilateral.
\end{theorem}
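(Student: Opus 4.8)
The plan is to strip the triangle-dependence out of the domain by an affine change of variables, converting the problem into the analysis of a matrix-dependent operator on a single fixed triangle. Fix the equilateral reference $E$ of unit side length, so $D_E = 1$, and write $T = L(E)$ for a linear map $L$. Pulling an eigenfunction $u$ on $T$ back to $v = u \circ L$ on $E$, the Dirichlet and mass integrals acquire a common Jacobian factor that cancels in the Rayleigh quotient, so that $\mu_j(T) = \mu_j(M)$, the $j$th Neumann eigenvalue of the operator $-\operatorname{div}(M \nabla \,\cdot\,)$ on $E$, where $M = (L^\top L)^{-1}$ is symmetric positive definite. In these same variables the diameter becomes $D_T^2 = \max_{i<j} e_{ij}^\top M^{-1} e_{ij}$, the maximum over the three unit edge vectors $e_{ij}$ of $E$. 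Thus the scale-invariant quantity to minimize is $F(M) = (\mu_2 + \dots + \mu_n)(M)\, D_T(M)^2$ over positive definite $M$, and the assertion is that $M \propto I$ (that is, $T$ equilateral) is the unique minimizer.

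Two structural facts organize the argument. First, by Ky Fan's principle $M \mapsto \mu_2(M) + \dots + \mu_n(M)$ is a minimum of functions linear in $M$, hence concave, and it is invariant under $M \mapsto O^\top M O$ for $O$ in the dihedral symmetry group $G$ of $E$. Averaging over $G$, which sends any symmetric $M$ to $\tfrac12 \operatorname{tr}(M)\, I$, therefore gives the \emph{upper} bound $\mu_2(M) + \dots + \mu_n(M) \le \tfrac12 \operatorname{tr}(M)\,(\mu_2 + \dots + \mu_n)|_E$. Second, averaging the quadratic form over the three edge directions, together with $\sum_{i<j} e_{ij} e_{ij}^\top = \tfrac32 I$, yields the \emph{lower} bound $D_T^2 \ge \tfrac12 \operatorname{tr}(M^{-1})$, with equality in each case at $M \propto I$. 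The difficulty is that these estimates point in opposite directions: concavity naturally controls the eigenvalue sum from above, whereas minimizing $F$ requires controlling it from below. This is also why the analogous \emph{area}-normalized functional, for which the same concavity cleanly makes the equilateral a maximizer, behaves oppositely, and why the diameter-normalized problem is the hard one; a one-line Jensen argument cannot close it.

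Consequently the proof must anchor on the one triangle whose spectrum is explicit, the equilateral, and produce genuine lower bounds on the Neumann sum by perturbing off it. I would first settle the base case $n = 3$, establishing both $(\mu_2 + \mu_3) D^2 \ge (\mu_2 + \mu_3) D^2|_E$ and the separate minimality of $\mu_3$ announced in the abstract. After the diameter normalization the moduli space of triangles is compact, and I would split it into a neighborhood of the equilateral and its complement. Near the equilateral one computes the second variation of $F$; here one must use degenerate perturbation theory, since $\mu_2 = \mu_3$ for $E$, and must account for the non-smooth $\max$ defining $D^2$ (equivalently, take the minimum over the three choices of longest edge), then show the resulting Hessian is positive definite on the two-dimensional shape space. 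Away from the equilateral, and in particular toward the degenerate triangles, one uses quantitative lower bounds for $\mu_2, \mu_3$ measured against the diameter, together with the explicit equilateral value, to keep $F$ strictly above its equilateral level.

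The main obstacle I anticipate is exactly the family of near-degenerate acute isosceles triangles. These are what defeat the statement for $n = 2$, where $\mu_2 D^2$ dips strictly below its equilateral value (a thin triangle behaves like a Neumann rod, giving $\mu_2 D^2 \to \pi^2 < \tfrac{16\pi^2}{9}$). To force the result through for $n \ge 3$ one must show that $\mu_3$, and for larger $n$ the subsequent eigenvalues, strictly over-compensate for this dip; this demands sharp two-sided control of the low Neumann modes on thin triangles rather than the crude averaged bounds above. Once $n = 3$ is in hand, I expect the extension to $n \ge 4$ to be comparatively routine, since the additional terms $\mu_4 + \dots + \mu_n$ no longer suffer the low-mode anomaly and their contribution to $F$ can be bounded below by the corresponding equilateral contribution, either by iterating the base-case analysis on successive partial sums or by an averaging argument that becomes favorable once the $\mu_2$ deficit has already been absorbed.
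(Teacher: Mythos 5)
Your setup (affine pullback to a fixed equilateral, Ky Fan concavity of the sum in the metric $M$, averaging over the dihedral group) is sound as far as it goes, and you correctly diagnose that it only yields an \emph{upper} bound on the eigenvalue sum, which is the wrong direction. But at the point where a lower bound is actually needed, the proposal has no mechanism: ``quantitative lower bounds for $\mu_2,\mu_3$ measured against the diameter'' and ``sharp two-sided control of the low Neumann modes on thin triangles'' are named as requirements, not supplied. This is precisely the crux of the problem, and the paper fills it with a specific device you do not have: transplant the \emph{unknown} eigenfunctions of the given (sub\-equilateral) triangle $T(0,b)$ along linear maps to serve as trial functions on triangles with explicitly known spectra --- the equilateral $E$ and the $30$-$60$-$90$ right triangles $F_\pm$ --- and exploit the identity $\gamma_n + (1-\gamma_n)=1$ for the Rayleigh-quotient distortion factors to show that at least one of the comparisons is favorable (Proposition~\ref{mutf} via Lemma~\ref{lemtrace}), then compare the spectra of $F_\pm$ and $E$ by explicit computation plus Weyl-type counting bounds (Lemma~\ref{comrt}). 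Your local second-variation analysis near the equilateral, even if carried out through the degenerate perturbation theory and the nonsmooth max defining $D^2$, would give only an unquantified neighborhood, and you have no way to match it against the global region; the compactness splitting therefore does not close.

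The final paragraph contains an outright error that would sink the planned induction: you assert that the terms $\mu_4+\dots+\mu_n$ ``no longer suffer the low-mode anomaly'' and can be bounded below termwise by their equilateral values. In fact $\mu_4 D^2$ is \emph{not} minimal at the equilateral --- it dips to about $51.66$ at a subequilateral triangle of aperture $\approx 0.5433$, below the equilateral value $3\cdot 16\pi^2/9\simeq 52.64$ (Figure~\ref{fig:tones}) --- so the termwise comparison fails already at $j=4$, and the cases $n=4,5,7,8,9$ turn out to be the \emph{hardest} ones: in the paper they are exactly where the trial-function constants barely miss and an auxiliary comparison triangle $G=T(0,2.14)$ with numerically certified eigenvalue sums must be introduced (Proposition~\ref{exceptional}, Table~\ref{numerical values}). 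You also omit the reduction to subequilateral triangles via the stretching lemma, which collapses the two-dimensional moduli space to a one-parameter family and is what makes the whole comparison scheme tractable. As it stands the proposal is a reasonable reconnaissance of the problem, but it does not contain a proof.
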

Multiplying the eigenvalues by $D^2$ renders them scale invariant. Note the eigenvalues of the equilateral triangle are known explicitly (see Appendix~\ref{equilateral}), so that the lower bound in the theorem is computable.

We prove the theorem in Sections~\ref{sec3}--\ref{sec5}. The proof is fully rigorous except when $n = 4,5,7,8,9$. For those values of $n$, the proof relies on numerical estimation of the eigenvalues $\mu_2,\ldots,\mu_9$ for one specific isosceles triangle. See Proposition~\ref{exceptional} and Table~\ref{numerical values}, below.

Theorem~\ref{main} is \emph{geometrically} sharp, meaning there exists an extremal domain for each $n$. It is the first sharp lower bound on Neumann eigenvalue sums. (Upper bounds are due to Laugesen and Siudeja \cite{LS11a}, under a moment of inertia normalization.) The theorem differs from the Weyl-type bounds of Kr\"{o}ger \cite{K94}, which are \emph{asymptotically} sharp as $n \to \infty$, for each domain.

Two reasons for studying such sums are that the sum represents the energy needed to fill the lowest $n$ quantum states under the Pauli exclusion principle, and that the eigenvalue sum provides a ``summability'' approach to studying the high eigenvalues ($\mu_n$ for large $n$), which are difficult to study directly.

We concentrate on triangular domains because they are the simplest domains whose eigenvalues cannot be computed explicitly. The ``hot spots'' conjecture of Jeffrey Rauch \cite{K85} about the maximum of the Neumann eigenfunction $u_2$ remains unsolved on acute triangles, in spite of Ba\~{n}uelos and Burdzy's proof for obtuse triangles by coupled Brownian motion \cite{BB99}. The triangular spectral gap conjecture of Antunes and Freitas \cite{AF08}, which claims that the difference of the first two Dirichlet eigenvalues is minimal for the equilateral, also remains unsolved. (The gap minimizer among convex domains is a degenerate rectangle \cite{AC10}, but that result sheds no light on the conjecture for triangles.) Clearly much remains to be discovered about triangles!

\smallskip
Theorem~\ref{main} fails for the second eigenvalue, $n=2$, because $\mu_2 D^2$ is minimized not by the equilateral but by the degenerate acute isosceles triangle, as Laugesen and Siudeja showed when finding the optimal Poincar\'{e} inequality on triangles \cite{LS10}.

For the third eigenvalue we do prove minimality of the equilateral, in Section~\ref{misc_proof}:
\begin{corollary}\label{mu3}
Among all triangles of given diameter, $\mu_3$ is minimal for the equilateral triangle. That is, $\mu_3 D^2 \geq 16\pi^2/9$ for all triangular domains, with equality if and only if the triangle is equilateral.
\end{corollary}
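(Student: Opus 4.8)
The plan is to derive this as a direct consequence of Theorem~\ref{main} in the case $n=3$, using only the elementary ordering $\mu_2 \le \mu_3$ together with the known spectrum of the equilateral triangle. Recall (see Appendix~\ref{equilateral}) that the first nonzero Neumann eigenvalue of the equilateral is doubly degenerate, so that $\mu_2 = \mu_3$ on $E$ and hence
\[
(\mu_2 + \mu_3) D^2 \big|_E = 2\, \mu_3 D^2 \big|_E = \frac{32\pi^2}{9}.
\]

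First I would invoke Theorem~\ref{main} with $n=3$ to obtain $(\mu_2 + \mu_3) D^2 \big|_T \ge 32\pi^2/9$ for every triangular domain $T$. Next, since the eigenvalues are ordered as $\mu_2 \le \mu_3$, the larger of the two is at least their average, giving $\mu_3 \ge \tfrac12(\mu_2 + \mu_3)$. Combining the two inequalities yields
\[
\mu_3 D^2 \big|_T \;\ge\; \tfrac12 (\mu_2 + \mu_3) D^2 \big|_T \;\ge\; \frac{16\pi^2}{9},
\]
which is the asserted bound.

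For the equality statement, suppose $\mu_3 D^2 \big|_T = 16\pi^2/9$. Then both inequalities in the displayed chain must be equalities; in particular $\tfrac12 (\mu_2 + \mu_3) D^2 \big|_T = 16\pi^2/9$, that is, $(\mu_2 + \mu_3) D^2 \big|_T = (\mu_2 + \mu_3) D^2 \big|_E$. By the equality case of Theorem~\ref{main}, this forces $T$ to be equilateral. Conversely, the equilateral attains the bound by the computation above. The one point that makes the argument work cleanly — and which I would emphasize — is the double degeneracy $\mu_2 = \mu_3$ on $E$: it is exactly what renders the averaged lower bound $\tfrac12(\mu_2+\mu_3)D^2$ tight at the equilateral, so that no loss is incurred in passing from the sum to the single eigenvalue $\mu_3$. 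Since Theorem~\ref{main} supplies all the analytic content, there is no substantial obstacle here; the only thing to verify carefully is that equality propagates back through the chain so as to trigger the rigidity in Theorem~\ref{main}.
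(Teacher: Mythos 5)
Your proof is correct, and it takes a genuinely different route from the paper's. The paper first reduces to subequilateral triangles via the stretching lemma (Lemma~\ref{stretching}), then quotes the result of Laugesen and Siudeja \cite{LS10} that $\mu_2 D^2$ is \emph{maximal} for the equilateral among subequilaterals, and subtracts that upper bound from the lower bound $(\mu_2+\mu_3)D^2\big|_T > (\mu_2+\mu_3)D^2\big|_E$ supplied by Theorem~\ref{main}; the difference of the two inequalities is exactly the claim for $\mu_3$. You instead bypass both the reduction step and the external $\mu_2$ result, using only the trivial ordering $\mu_3 \ge \tfrac12(\mu_2+\mu_3)$ together with the double degeneracy $\mu_2 = \mu_3$ on $E$, which makes the averaged bound tight at the extremal. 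Your argument is more self-contained (its only nontrivial input is Theorem~\ref{main} with $n=3$ and the explicit spectrum of $E$), and your handling of the equality case --- propagating equality back through the chain to trigger the rigidity statement of Theorem~\ref{main} --- is sound. What the paper's subtraction argument buys in exchange is a pointwise-stronger estimate, namely $\mu_3 D^2\big|_T > (\mu_2+\mu_3)D^2\big|_E - \mu_2 D^2\big|_T$, which exceeds $16\pi^2/9$ by the full deficit of $\mu_2 D^2\big|_T$ below $\mu_2 D^2\big|_E$; but for the statement of Corollary~\ref{mu3} as given, nothing is lost by your approach.
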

The fourth eigenvalue is \emph{not} minimal for the equilateral, as shown by the numerical work in Figure~\ref{fig:tones}. The minimum appears to occur at the intersection of two eigenvalue branches.
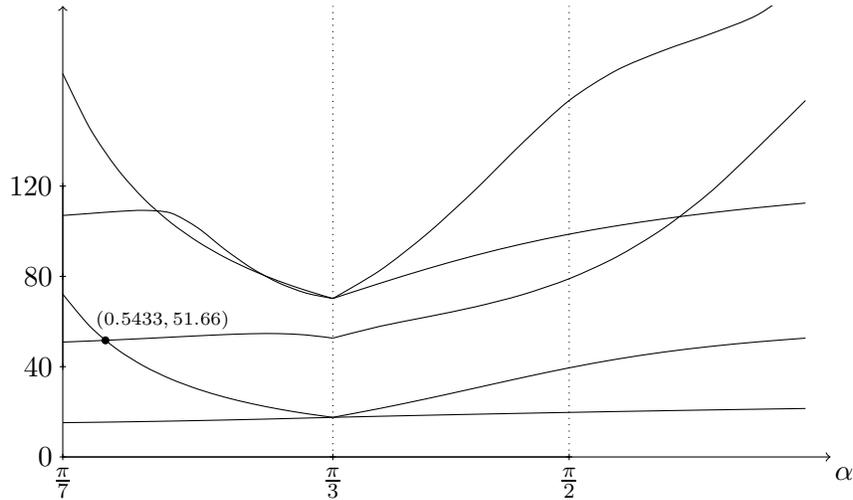
\begin{figure}[t]
  \begin{center}
    \begin{tikzpicture}[scale=6,smooth]
      \draw[<->] (pi/7,1) |- (2.15,0);
      \clip (0.3,1) rectangle (2.2,-0.1);
      \draw (pi/7,0) node[below] {$\frac{\pi}{7}$};
      \draw (pi/3,0) node[below] {$\frac{\pi}{3}$};
      \draw (pi/2,0) node[below] {$\frac{\pi}{2}$};
      \draw (2.18,0) node[below] {$\alpha$};
      \draw (pi/7,0) node[left] {$0$};
      \draw (pi/7,40*0.005) node[left] {$40$};
      \draw (pi/7,80*0.005) node[left] {$80$};
      \draw (pi/7,120*0.005) node[left] {$120$};
      \draw[dotted] (pi/3,0) -- (pi/3,1);
      \draw[dotted] (pi/2,0) -- (pi/2,1);
      \draw [mark=+, mark size=0.2] plot coordinates{(pi/7,0) (pi/3,0) (pi/2,0)};
      \draw [mark=+, mark size=0.2] plot coordinates{(pi/7,40*0.005) (pi/7,80*0.005) (pi/7,120*0.005)};

        \draw [yscale=0.005] plot coordinates{
( 0.4488,72.1139)( 0.5086,57.9203)( 0.5685,47.7810)( 0.6283,40.2687)( 0.6882,34.5373)( 0.7480,30.0580)( 0.8078,26.4863)( 0.8677,23.5897)( 0.9275,21.2061)( 0.9874,19.2199)( 1.0472,17.5467)
       };
        \draw [yscale=0.005] plot coordinates{
( 0.4488,169.7954)( 0.5086,145.6839)(0.5685,127.9234)( 0.6283,114.3962)( 0.6882,103.8134)( 0.7480,95.3488)( 0.8078,88.4487)( 0.8677,82.7279)( 0.9275,77.9090)( 0.9874,73.7858)( 1.0472,70.2009)
       };
        \draw [yscale=0.005] plot coordinates{
( 0.4488,15.1856)( 0.5086,15.3319)( 0.5685,15.4980)( 0.6283,15.6843)( 0.6882,15.8911)( 0.7480,16.1184)( 0.8078,16.3663)( 0.8677,16.6343)( 0.9275,16.9216)( 0.9874,17.2267)( 1.0472,17.5469)
         };
        \draw [yscale=0.005] plot coordinates{
( 0.4488,50.8980)( 0.5086,51.3716)( 0.5685,51.8980)( 0.6283,52.4685)( 0.6882,53.0678)( 0.7480,53.6669)( 0.8078,54.2120)( 0.8677,54.6010)( 0.9275,54.6468)( 0.9874,54.0664)( 1.0472,52.6476)
      };
         \draw [yscale=0.005] plot coordinates{
( 0.4488,106.9827)( 0.5086,107.8767)( 0.5685,108.7367)( 0.6283,109.2524)( 0.6882,108.0722)( 0.7480,101.4950)( 0.8078,91.9613)( 0.8677,83.5811)( 0.9275,77.0899)( 0.9874,72.6266)( 1.0472,70.1981)
       };
        \draw [yscale=0.005] plot coordinates{
( 1.0472,17.5467)( 1.1519,18.0249)( 1.2566,18.4824)( 1.3614,18.9202)( 1.4661,19.3391)( 1.5708,19.7398)( 1.6755,20.1222)( 1.7802,20.4866)( 1.8850,20.8324)( 1.9897,21.1593)( 2.0944,21.4665)
       };
        \draw [yscale=0.005] plot coordinates{
( 1.0472,70.2009)( 1.1519,76.9673)( 1.2566,83.3902)( 1.3614,89.2344)( 1.4661,94.3500)( 1.5708,98.7127)( 1.6755,102.3958)( 1.7802,105.5133)( 1.8850,108.1772)( 1.9897,110.4795)( 2.0944,112.4895)
         };
        \draw [yscale=0.005] plot coordinates{
( 1.0472,17.5469)( 1.1519,21.5130)( 1.2566,25.8608)( 1.3614,30.4675)( 1.4661,35.1089)( 1.5708,39.4804)( 1.6755,43.3055)( 1.7802,46.4620)( 1.8850,48.9914)( 1.9897,51.0112)( 2.0944,52.6407)
         };
        \draw [yscale=0.005] plot coordinates{
( 1.0472,52.6476)( 1.1519,57.9005)( 1.2566,62.0794)( 1.3614,66.4473)( 1.4661,71.8275)( 1.5708,78.9687)( 1.6755,88.5963)( 1.7802,101.3303)( 1.8850,117.6049)( 1.9897,137.2928)( 2.0944,157.9429)
      };
         \draw [yscale=0.005] plot coordinates{
( 1.0472,70.1981)( 1.1519,82.6791)( 1.2566,98.9861)( 1.3614,118.1248)( 1.4661,138.8719)( 1.5708,157.9454)( 1.6755,171.2929)( 1.7802,180.1369)( 1.8850,187.5220)( 1.9897,196.0473)( 2.0944,210.5943)
       };
      \fill (0.543282,51.6623*0.005) circle (0.25pt) ;
	  \draw (0.67,52*0.005) node [above] {\tiny $(0.5433,51.66)$} ;
    \end{tikzpicture}
  \label{figl}

  \end{center}
  \caption{Numerical plot of the first five nonzero Neumann eigenvalues normalized by diameter ($\mu_j D^2$ for $j=2,\ldots,6$) of an isosceles triangle, computed by the PDE Toolbox in Matlab and plotted as a function of the aperture angle $\alpha$ between the two equal sides. The minimum value of $\mu_4 D^2$ is approximately $51.66$, occurring at $\alpha \simeq 0.5433$ (to $4$ significant figures). The value at the equilateral triangle ($\alpha=\pi/3$) is larger: $\mu_4 D^2=3\cdot 16\pi^2/9 \simeq 52.64$.}
  \label{fig:tones}
\end{figure}

Now let us consider other shapes. Among rectangles of a given diameter, the square does not always minimize the sum of the first $n$ Neumann eigenvalues. For example, by plotting the first $12$ eigenvalues as a function of side-ratio, one finds that the square fails to minimize $(\mu_2+\dots+\mu_n)D^2$ when $n=5,6,7,10,11,12$.


Ellipses behave more agreeably, for each individual eigenvalue, as we prove in Section~\ref{misc_proof}:
\begin{proposition}\label{ellipse}
Among ellipses of given diameter, the disk minimizes each eigenvalue of the Neumann Laplacian. That is, for each $j \geq 2$, the quantity $\mu_j D^2$ is strictly minimal when the ellipse is a disk .
\end{proposition}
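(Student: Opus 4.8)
The plan is to reduce the proposition to a monotonicity statement for a one–parameter family of anisotropic Rayleigh quotients on a single fixed domain. Since $\mu_j D^2$ is scale invariant and every ellipse is similar to exactly one member of the family
\[
E_b = \{(x,y) : x^2 + y^2/b^2 < 1\}, \qquad 0 < b \le 1,
\]
each of which has diameter $D = 2$ and collapses to the unit disk $B$ at $b = 1$, it suffices to prove that $b \mapsto \mu_j(E_b)$ is strictly decreasing on $(0,1]$ for every $j \ge 2$. First I would apply the change of variables $(x,y) \mapsto (x,\eta)$ with $\eta = y/b$, which carries $E_b$ onto the fixed disk $B$ and transforms the Neumann Rayleigh quotient into
\[
R_b(v) = \frac{\int_B (v_x^2 + b^{-2} v_\eta^2)\,dx\,d\eta}{\int_B v^2\,dx\,d\eta},
\]
the Jacobian factor $b$ cancelling between numerator and denominator.

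For each fixed $v \in H^1(B)$ the quotient $R_b(v)$ is plainly nonincreasing in $b$, because $b^{-2}$ decreases as $b$ grows, and it is strictly decreasing exactly when $\int_B v_\eta^2 > 0$. By the Courant–Fischer min--max characterization
\[
\mu_j(E_b) = \min_{\substack{V \subset H^1(B)\\ \dim V = j}} \ \max_{0 \ne v \in V} R_b(v),
\]
the eigenvalue $\mu_j(E_b)$ at once inherits nonincreasing monotonicity in $b$. To upgrade this to \emph{strict} monotonicity I would use the Hadamard--Feynman--Hellmann formula along an analytic eigenvalue branch $b \mapsto (\mu(b),v(b))$, namely
\[
\frac{d\mu}{db} = \frac{\partial R_b}{\partial b}(v) = -\,2\,b^{-3} \int_B v_\eta^2\,dx\,d\eta ,
\]
with $v$ normalized in $L^2(B)$.

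The crux is then to show that no eigenfunction for $j \ge 2$ can be independent of $\eta$, so that the right–hand side above is strictly negative. Suppose $v_\eta \equiv 0$. The conormal boundary condition for the transformed operator $-\partial_x^2 - b^{-2}\partial_\eta^2$ reads $(v_x,\,b^{-2} v_\eta)\cdot N = 0$ on $\partial B$, where $N = (x,\eta)$ is the outward unit normal; with $v_\eta \equiv 0$ this becomes $x\,v'(x) = 0$ at every boundary point, forcing $v'(x) = 0$ for all $x \in (-1,1)$ and hence $v$ constant. But a constant is the ground state, with eigenvalue $0 = \mu_1$, contradicting $j \ge 2$. Thus $\int_B v_\eta^2 > 0$ for every eigenfunction with $j \ge 2$, so the derivative is strictly negative wherever it exists, and the Lipschitz function $b \mapsto \mu_j(E_b)$ is strictly decreasing. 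Its minimum over $(0,1]$ is therefore attained only at the disk $b=1$, which is the assertion of the proposition.

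I expect the main obstacle to be the rigorous treatment of strict monotonicity at eigenvalue crossings, where $\mu_j(E_b)$ need not be differentiable and the ordered eigenvalue jumps between analytic branches. One must either appeal to the analytic branches furnished by Kato's perturbation theory and check that each is strictly decreasing, or else give a direct min--max comparison proving $\mu_j(E_{b_0}) > \mu_j(E_{b_1})$ for $b_0 < b_1$ by feeding the optimal $j$-dimensional subspace at one parameter into the quotient at the other and invoking the lemma that the optimizing functions cannot be $\eta$-independent.
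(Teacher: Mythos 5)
Your proof is correct, and its core mechanism --- pull the ellipse back to a disk of the same diameter, observe that the transformed Rayleigh quotient $R_b$ is monotone in the stretching parameter, apply min--max, and derive strictness from the fact that a Neumann eigenfunction independent of one variable is forced by the boundary condition to be constant --- is exactly what the paper does via its general Stretching Lemma (Lemma~\ref{stretching}), which it then applies to ellipses in one line. The only real divergence is how strictness is extracted: you lead with the Hellmann--Feynman derivative along analytic eigenvalue branches, which (as you yourself note) runs into the non-differentiability of the ordered eigenvalue at crossings and needs Kato's analytic perturbation theory to repair; the paper instead uses the direct trial-function comparison $\mu_j(\Omega_t) \le R[v] = R_t[u] \le R[u] = \mu_j(\Omega)$, in which equality forces $u_y \equiv 0$, and this is precisely the fallback you sketch in your final paragraph. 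In short, your fallback \emph{is} the paper's proof; your primary route also works once the crossing issue is handled, but it buys nothing here and costs the extra perturbation-theoretic machinery.
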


What about general convex domains? Our result for triangles in Theorem~\ref{main}, together with  Proposition~\ref{ellipse} for ellipses, suggests that:
\begin{conjecture}
Among convex domains of given diameter, the disk minimizes the sum of the first $n \geq 3$ eigenvalues of the Neumann Laplacian. That is,
$(\mu_2+\dots+\mu_n)D^2$ is minimal when the domain is a disk, for each $n \geq 3$.
\end{conjecture}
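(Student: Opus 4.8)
\emph{Proof proposal.} The plan is to follow the architecture of Theorem~\ref{main}: produce a minimizer, show that it cannot be degenerate precisely because $n \geq 3$, and then identify it as the disk. Throughout, normalize the diameter to $D = 1$ and write $F_n(\Omega) = (\mu_2 + \dots + \mu_n)(\Omega)\,D(\Omega)^2$ for the scale-invariant functional on the class of planar convex bodies.

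First I would establish existence of a minimizer. The convex bodies of diameter $1$ form a compact family in the Hausdorff metric by the Blaschke selection theorem, and on this family each Neumann eigenvalue $\mu_j$ varies continuously: convex domains are uniformly Lipschitz away from total collapse, so the Rayleigh quotients converge under Hausdorff convergence. Hence $F_n$ attains its infimum, either at a genuinely two-dimensional convex body or in a limit of collapsing (thin) bodies.

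The step where the hypothesis $n \geq 3$ must enter is the exclusion of degenerate minimizers, and here the sum structure is essential. As a convex body collapses onto a segment of length $\to 1$, its low spectrum reduces to a one-dimensional weighted Neumann problem on an interval; a direct estimate in the spirit of Payne--Weinberger bounds the limiting $F_n$ from below. In the model thin rectangle the nonzero eigenvalues are $k^2\pi^2$, giving $F_n \to \pi^2 \sum_{k=1}^{n-1} k^2 = \pi^2 (n-1)n(2n-1)/6$, and for every $n \geq 3$ this strictly exceeds the explicit disk value computed from the Bessel spectrum (e.g.\ $F_3(\mathrm{disk}) = 8\,(j'_{1,1})^2 \approx 27.1 < 5\pi^2 \approx 49.3$). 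For $n = 2$ the inequality reverses ($\pi^2 \approx 9.87 < 4\,(j'_{1,1})^2 \approx 13.6$), which is exactly why the conjecture must begin at $n = 3$. Thus the minimizer is a non-degenerate convex body whose eigenvalues cluster into finitely many groups.

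The main obstacle is the final rigidity step. At a non-degenerate minimizer $\Omega^\ast$ one would derive the first-order optimality condition from the Hadamard shape-derivative formula for the (possibly clustered) eigenvalues $\mu_2,\dots,\mu_n$, using perturbations that preserve the diameter. Because the diameter is nonsmooth and active only at the boundary pair realizing it, the variational inequality should force the averaged energy $\sum_{j=2}^{n} |\nabla u_j|^2$ to be constant on the \emph{free} part of $\partial\Omega^\ast$, with the constraint multiplier supported only at the contact points; one would then hope to run a Serrin-type moving-plane or $P$-function argument to conclude that $\Omega^\ast$ is a disk. I expect this to be the crux, and the reason the statement remains only a conjecture: the overdetermined condition holds on just part of the boundary, so classical rigidity does not apply verbatim, and no known symmetrization decreases $\mu_2 + \dots + \mu_n$ while fixing the diameter (Steiner and two-point symmetrizations are not monotone for Neumann eigenvalues). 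As a concrete first step I would instead prove local minimality: parametrizing nearby domains by a normal boundary displacement $\sum_m (a_m \cos m\theta + b_m \sin m\theta)$ and computing the second shape derivative of $F_n$ at the disk with the explicit Bessel eigenfunctions, one should find the Hessian positive for $n \geq 3$ mode by mode, the summation over the cluster being what restores positivity even though the $\mu_2$-Hessian alone is indefinite. This would corroborate the conjecture and isolate precisely the global rigidity as the remaining gap.
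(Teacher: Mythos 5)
You should be aware first that this statement is not a theorem of the paper: it appears as an open \emph{conjecture}, supported only by Theorem~\ref{main} (triangles), Proposition~\ref{ellipse} (ellipses), and the remark that $n=2$ fails because of Payne and Weinberger's result \cite{PW60}. So there is no proof in the paper to compare yours against; the only question is whether your argument would itself constitute a proof, and it does not --- you say as much yourself, since the decisive step (identifying the minimizer as the disk) is left open. Incidentally, your plan does not really mirror the architecture of Theorem~\ref{main} either: that proof is a direct transplantation-of-trial-functions comparison against triangles with explicitly known spectra, not an existence-plus-rigidity argument. Your completed portions are sound in outline --- Blaschke selection plus spectral continuity on nondegenerate convex limits does give a minimizer in the closure of the class --- and your numerics are correct: for diameter $1$, $(\mu_2+\mu_3)D^2\big|_{\mathrm{disk}}=8(j'_{1,1})^2\approx 27.1$, the thin-rectangle limit gives $5\pi^2\approx 49.3$, and for $n=2$ the comparison reverses, consistent with \cite{PW60}.

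Beyond the admitted one, there are two genuine gaps. First, excluding degenerate minimizers requires more than the thin-rectangle computation: a collapsing convex body of diameter $1$ yields in the limit a weighted Sturm--Liouville problem $-(hu')'=\mu h u$ on $[0,1]$ with Neumann conditions, where $h$ is the concave limiting height profile. You need $\mu_2+\dots+\mu_n$ for this limiting problem to exceed the disk value uniformly over \emph{all} concave $h$, not just for $h\equiv\mathrm{const}$. Payne--Weinberger's argument gives exactly this for $\mu_2$ alone; for the sum it is a plausible but unproved comparison (the collapsing triangle, $h(x)=x$, has $\mu_2=(j'_{0,1})^2\approx 14.7>\pi^2$, which supports, but does not prove, that the rectangle is the worst degenerate case). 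Second, the rigidity step is not merely technically incomplete; it is the entire content of the conjecture, and your sketch of it has a structural flaw: at the disk the diameter constraint is active at \emph{every} antipodal boundary pair, so the Lagrange multiplier is a measure potentially supported on the whole boundary rather than ``only at the contact points,'' and no Serrin-type theorem applies to the resulting partially overdetermined condition. Likewise the claimed positivity of the clustered second variation at the disk is asserted, not computed. In short, your proposal is a reasonable research program, correctly calibrated as to why $n\geq 3$ is needed, but it establishes nothing beyond what the paper already records, and it would need at least the concave-weight comparison and a full rigidity (or other global) argument to become a proof.
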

The conjecture fails for $n=2$, because Payne and Weinberger proved $\mu_2 D^2$ is minimal for the degenerate rectangle (and not the disk) among all convex domains \cite{PW60}. In other words, they proved that the optimal Poincar\'{e} inequality for convex domains is saturated by the degenerate rectangle.

\subsection*{Dirichlet and Robin boundary conditions}
Minimality of \emph{Dirichlet} eigenvalue sums for the equilateral, among all triangles of given diameter, was proved recently by Laugesen and Siudeja \cite{LS11}, for each $n \geq 1$. We will adapt their Method of the Unknown Trial Function to the Neumann case. The adaptation breaks down for triangles that are ``close to equilateral'' when $n=4,5,7,8,9$, as we see in the next section. To overcome that obstacle we introduce a new triangle with which to compare, in Proposition~\ref{exceptional}. The eigenvalues of this triangle are not known explicitly, which necessitates a numerical evaluation for those exceptional $n$- values.

Similar results should presumably hold under Robin boundary conditions, although no such results have been proved. The Method of the Unknown Trial Function seems not to work there, because the boundary integral in the Robin Rayleigh quotient transforms differently from the integrals over the domain, under linear maps.

For more information on isoperimetric-type eigenvalue inequalities in mathematical physics (the general area of this paper), see the survey by Ashbaugh \cite{A99}, and the monographs of Bandle \cite{B80}, Henrot \cite{He06}, Kawohl \cite{K85}, Kesavan \cite{K06} and P\'{o}lya--Szeg\H{o} \cite{PS51}.

\section{Method of the Unknown Trial Function: the proof of Theorem~\ref{main}}
\label{sec3}

\begin{definition}\rm
The \emph{aperture} of an isosceles triangle is the angle between
its two equal sides. Call a triangle \emph{subequilateral} if it is isosceles with
aperture less than $\pi/3$, and \emph{superequilateral} if it is
isosceles with aperture greater than $\pi/3$.
\end{definition}

The theorem will be proved in three steps.

\medskip
\noindent Step 1 --- Reduction to subequilateral triangles. Suppose the given triangle is not equilateral. We may suppose it is subequilateral, as follows. Stretch the triangle in the direction perpendicular to its longest side, until one of the other two sides has the same length as the longest one. This subequilateral triangle has the same diameter as the original triangle, and has strictly smaller eigenvalue sums by Lemma~\ref{stretching} later in the paper. (When applying the equality statement of that lemma, notice that a second-or-higher Neumann eigenfunction of a triangle cannot depend only on $x$, because the boundary condition would force such a function to be constant.)

Thus it suffices to prove the theorem for subequilateral triangles.

\medskip
\noindent Step 2 --- Method of the Unknown Trial Function.
Write
\[
M_n = \mu_2 + \cdots + \mu_n
\]
for the sum of the first $n$ eigenvalues (where we omit $\mu_1=0$). Define
\[
T(a,b) = \text{triangle having vertices at $(-1,0), (1,0)$ and $(a,b)$,}
\]
where $a \in \R$ and $b>0$. The triangle $T(a,b)$ is isosceles if $a=0$, and subequilateral if in addition $b>\sqrt{3}$. We will prove the theorem for the subequilateral triangle $T(0,b)$ with $b>\sqrt{3}$.

Further define three special triangles
\begin{align*}
E & = T(0,\sqrt{3}) = \text{equilateral triangle,} \\
F_+ & = T(+1,2\sqrt{3})  = \text{30-60-90 right triangle,} \\
F_- & = T(-1,2\sqrt{3})  = \text{30-60-90 right triangle.}
\end{align*}
The spectra of these triangles are explicitly computable, as we shall need in Step 3 below. Notice $F_+$ and $F_-$ have the same spectra, by symmetry.

Our method involves transplanting the ``unknown' eigenfunctions of the triangle $T(0,b)$ to obtain trial functions for the (known) eigenvalues of the special triangles $E,F_+,F_-$; see Figure~\ref{fig:lineartrans}. By this technique we will prove:

\begin{figure}[t]
  \begin{center}
	\begin{tikzpicture}[scale=1.5]
	  \draw[very thick,red] (-1,0) -- (1,0) -- (0,2.5) node [right] {$(0,b)$} -- cycle;
	  \draw[thick, blue] (-1,0) -- (1,0) -- (0,{sqrt(3)}) -- cycle;
	  \draw (0,1/2) node [below] {$E$} ;
	  \draw (-1,0) node [below] {$(-1,0)$} -- (1,0) -- (-1,{2*sqrt(3)}) -- cycle;
	  \draw (-1,2) node [right] {$F_-$} ;
	  \draw (1,0) node [below] {$(1,0)$} -- (-1,0) -- (1,{2*sqrt(3)}) -- cycle;
	  \draw[green!50!black,dashed,thick] (0,2.5) -- (-1,{2*sqrt(3)}) ;
	  \draw (1,2) node [left] {$F_+$} ;
	  \draw[green!50!black,dashed,thick] (0,2.5) -- (1,{2*sqrt(3)}) ;
	  \draw[green!50!black,dashed,thick] (0,2.5) -- (0,{sqrt(3)}) ;
	\end{tikzpicture}
  \end{center}
  \caption{Linear maps to the subequilateral triangle $T(0,b)$, from the equilateral triangle $E$ and right triangles $F_+,F_-$.}
  \label{fig:lineartrans}
\end{figure}
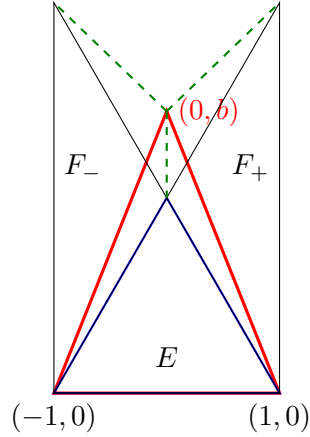

\begin{proposition} \label{mutf}
For each $n \geq 2$:
\begin{enumerate}
\item[(a)] if $b>\sqrt{3}$ then
\[
M_n D^2 \big|_{T(0,b)} > \min\{M_n D^2 \big|_E, \frac{6}{11} M_n D^2 \big|_{F_\pm} \} ;
\]
\item[(b)] if $b \geq 2.14$, then a better lower bound holds, namely
\[
M_n D^2 \big|_{T(0,b)} > \min\{M_n D^2 \big|_E, \frac{5}{8} M_n D^2 \big|_{F_\pm}\} .
\]
\end{enumerate}

\end{proposition}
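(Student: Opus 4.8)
The plan is to prove Proposition~\ref{mutf} by the Method of the Unknown Trial Function: I will take the unknown Neumann eigenfunctions of the subequilateral triangle $T(0,b)$, pull them back under the linear maps that send $E$, $F_+$, $F_-$ onto $T(0,b)$, and use the resulting functions as trial functions in the Rayleigh quotients for the \emph{known} eigenvalues of $E$, $F_+$, $F_-$. Let $\phi$ denote the affine map carrying $E$ to $T(0,b)$; since both triangles have base from $(-1,0)$ to $(1,0)$, this map fixes the $x$-coordinate and stretches vertically by the factor $b/\sqrt3$, so its Jacobian is constant. The key algebraic fact is that under such a linear map the Dirichlet energy $\int|\nabla v|^2$ and the mass $\int v^2$ transform by explicit constant factors depending only on the linear part; because the Neumann problem has no boundary term in the Rayleigh quotient, the transplanted functions are genuine $H^1$ trial functions with no boundary obstruction. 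I would assemble the transplanted eigenfunctions of $T(0,b)$ into trial subspaces and invoke the variational (Courant--Fischer / Ky~Fan) characterization of eigenvalue sums: $M_n = \mu_2+\cdots+\mu_n$ equals the minimum over $(n-1)$-dimensional subspaces $V$ (orthogonal to constants) of the sum of Rayleigh quotients, equivalently the trace of the form restricted to $V$.

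Concretely, I would feed the $n-1$ transplanted eigenfunctions into the Rayleigh-quotient trace bound for $E$, and similarly for $F_+$ and $F_-$, to obtain inequalities of the shape $M_n|_E \le (\text{linear-map constants}) \cdot M_n|_{T(0,b)}$ and likewise for $F_\pm$. Here the orthogonality requirements and the need to subtract off the constant mode must be handled carefully: the transplanted functions need not be orthogonal, so I would use the trace form of the min-max (summing the form over any $(n-1)$-dimensional trial space, not requiring an orthonormal eigenbasis) to avoid Gram-matrix complications. Rearranging the resulting inequalities to isolate $M_n|_{T(0,b)}$ yields lower bounds in terms of $M_n|_E$ and $M_n|_{F_\pm}$, and the two transplantations ($E$ versus $F_\pm$) are combined by taking the worse (minimum) of the two comparison quantities, producing the $\min\{\cdots\}$ on the right-hand side. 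The constants $6/11$ in part~(a) and $5/8$ in part~(b) should emerge from optimizing the geometric factors: one gets to \emph{choose} how to split the transplanted energy between the $E$-comparison and the $F_\pm$-comparison, and the coefficient is the best constant achievable for the relevant range of $b$, with the sharper $5/8$ available once $b \ge 2.14$ keeps $T(0,b)$ sufficiently far from equilateral that the $F_\pm$ geometry dominates.

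The main technical engine is the transformation law for the Rayleigh quotient under a non-conformal linear map. If $L$ is the linear part carrying one triangle to another, then for $w = v\circ L^{-1}$ one has $\int|\nabla w|^2 = \int (L^{-T}\nabla v)\cdot(L^{-T}\nabla v)\,|\det L|$ and $\int w^2 = |\det L|\int v^2$, so the numerator of the Rayleigh quotient is distorted anisotropically by the eigenvalues of $L^{-T}L^{-1}$. The crux is that this distortion is \emph{not} a single scalar: a transplanted function's energy gets reweighted differently in the $x$- and $y$-directions. To turn this into a clean comparison I would bound the anisotropic form by its extreme singular values, relate those singular values to the geometric parameters (the base length, the height $b$, and the horizontal shift defining $F_\pm$), and then the diameter normalization $D^2$ converts everything into the scale-invariant quantities appearing in the statement. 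Taking the minimum over the $E$- and $F_\pm$-routes is what lets a single choice of constant work uniformly.

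I expect the principal obstacle to be controlling the anisotropic energy distortion sharply enough to secure the explicit constants $6/11$ and $5/8$ rather than merely qualitative inequalities, and in particular verifying that the strict inequality survives (rather than collapsing to equality) across the whole range $b>\sqrt3$. A subtlety worth flagging: the eigenfunctions of $T(0,b)$ are unknown, so I cannot exploit any special structure of $v$ beyond membership in the eigenspace and the variational characterization; all gains must come from the geometry of the maps, handled uniformly in $v$. I would also need to confirm that using $E$, $F_+$, and $F_-$ \emph{simultaneously} (three different target triangles, three transplantation constants) genuinely improves the bound over using any one alone --- this is presumably why all three special triangles appear in Figure~\ref{fig:lineartrans} --- and that the threshold $b \ge 2.14$ is exactly where the $F_\pm$-comparison constant can be pushed from $6/11$ up to $5/8$.
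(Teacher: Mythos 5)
You have correctly identified the overall strategy --- transplanting the unknown eigenfunctions of $T(0,b)$ to $E$ and $F_\pm$ by linear maps and using the trace form of the variational principle --- which is exactly the paper's Method of the Unknown Trial Function (encapsulated in Lemma~\ref{lemtrace}). But your plan for the decisive step would fail. You propose to ``bound the anisotropic form by its extreme singular values.'' That throws away precisely the information that makes the proof work. The map from $E$ to $T(0,b)$ leaves the $x$-direction alone and stretches $y$ by $b/\sqrt3$, so the worst-case singular-value bound for the $E$-comparison degenerates as $b\to\infty$ (the diameter-normalized bound tends to $\tfrac34 M_nD^2|_E$, which is not enough), and the analogous worst-case bound for the $F_\pm$-comparison fails for $b$ near $\sqrt3$. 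Neither route works uniformly, and treated independently with worst-case constants their minimum does not cover the whole range. The missing idea is that the two comparisons must be coupled through a single unknown scalar: the paper introduces $\gamma_n$, the fraction of the total Dirichlet energy of $u_1,\dots,u_n$ carried by the $y$-derivatives, keeps the anisotropic energy \emph{exactly} in terms of $\gamma_n$ in both transplantations, and observes that the $E$-inequality holds iff $\gamma_n<\tfrac34$ while the $F_\pm$-inequality (with constant $\tfrac58$) holds for all $\gamma_n\in[\tfrac34,1]$ once $b^2>13-\tfrac{19}{6-5\gamma_n}$, whose maximum over that interval is about $(2.134)^2$. The complementary monotonicity in $\gamma_n$ is what produces the $\min\{\cdot,\cdot\}$ and the explicit constants; you gesture at ``choosing how to split the energy'' but then propose a bound that forecloses exactly that choice.

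A second, smaller omission: you wonder why both $F_+$ and $F_-$ appear. The reason is the cross term $\delta_n$ (proportional to $\sum_j\int u_{j,x}u_{j,y}$) generated by the shear in the map to $T(\pm1,2\sqrt3)$. It enters the two transplantations with opposite signs, and since $F_+$ and $F_-$ are isometric one is free to pick the sign for which the cross term helps, so it can simply be dropped. Without this observation you would be stuck estimating an unknown signed quantity. Finally, note that the paper does not reprove part (a) at all: it is quoted from Laugesen--Siudeja's Dirichlet argument, which carries over verbatim to the Neumann setting once $n\ge2$ guarantees the denominator defining $\gamma_n$ is nonzero; only part (b) is proved afresh, by rerunning the same $\gamma_n$-dichotomy with the sharper constant $\tfrac58$ under the hypothesis $b\ge2.14$.
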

The proof is in Section~\ref{sec4}.

\medskip
\noindent Step 3 --- Compare eigenvalues of right and equilateral triangles.
\begin{lemma} \label{comrt} \

\begin{enumerate}
\item[(a)]
$\frac{6}{11}M_n D^2\big|_{F_\pm} \geq M_n D^2\big|_E$ for $n=3,6$ and each $n \geq10$.
\item[(b)]
$\frac{5}{8}M_n D^2\big|_{F_\pm} \geq M_n D^2\big|_E$ for $n=4,5,7,8,9$.
\end{enumerate}
\end{lemma}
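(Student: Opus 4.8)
The plan is to reduce the lemma to an inequality between two \emph{explicit} sequences, and then to split the work into a finite verification and a tail estimate. First I would record the scale factors. The equilateral $E=T(0,\sqrt3)$ has side length $2$, so $D^2\big|_E = 4$, while $F_\pm$ is the $30$-$60$-$90$ triangle with legs $2,2\sqrt3$ and hypotenuse $4$, so $D^2\big|_{F_\pm}=16$. Thus part (a) asks for $\tfrac{6}{11}\cdot 16\cdot M_n\big|_{F_\pm}\ge 4\,M_n\big|_E$ and part (b) for $\tfrac58\cdot 16\cdot M_n\big|_{F_\pm}\ge 4\,M_n\big|_E$, and in both cases the two raw spectra are known exactly: the equilateral eigenvalues are the rescaled values of the quadratic form $m^2+mn+n^2$ from Appendix~\ref{equilateral}, and $F_\pm$ is the ``hemiequilateral'' half of an equilateral triangle of side $4$, so its Neumann spectrum equals $\tfrac14$ times the part of the equilateral spectrum whose eigenfunctions are even across an altitude.

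For the finitely many exponents in question, $n\in\{3,4,5,6,7,8,9\}$ (and indeed up to any fixed threshold), I would simply list the normalized eigenvalues $\mu_j D^2$ of $E$ and of $F_\pm$ in increasing order, form the partial sums $M_n D^2$, and check the stated inequality with the appropriate constant ($\tfrac{6}{11}$ for $n=3,6$ in (a); $\tfrac58$ for $n=4,5,7,8,9$ in (b)). This is a finite computation, best displayed as a table; since every entry is an exact rational multiple of $\pi^2$, the comparisons are rigorous rational inequalities rather than numerical approximations.

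The substance is the tail $n\ge 10$ in part (a). The guiding fact is the area ratio: since $|E|=\sqrt3$ and $|F_\pm|=2\sqrt3$, Weyl's law $M_n D^2\sim 2\pi n^2 D^2/|\Omega|$ gives $M_n D^2\big|_{F_\pm}\big/M_n D^2\big|_E\to (16/2\sqrt3)/(4/\sqrt3)=2$, so $\tfrac{6}{11}$ times this ratio tends to $\tfrac{12}{11}>1$ and the inequality holds with asymptotic slack. To make this effective from $n=10$ on, I would use the folding to write $M_n\big|_{F_\pm}=\tfrac14\sum_{k=2}^n s_k$, where $s_1\le s_2\le\cdots$ are the nonzero symmetric equilateral eigenvalues, while $M_n\big|_E$ sums the $n-1$ smallest entries of the full (symmetric $\cup$ antisymmetric) list; the comparison then reduces to $\sum_{k=2}^n s_k\ge \tfrac{11}{6}\,M_n\big|_E$. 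The symmetric eigenvalues have asymptotically half the density of the full spectrum, which is exactly what yields the factor $2$, and a Gauss-circle-type two-sided count for $m^2+mn+n^2$ makes both the needed upper bound on $M_n\big|_E$ and lower bound on $\sum_{k=2}^n s_k$ explicit.

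The step I expect to be the main obstacle is precisely this tail estimate for $n\ge 10$: the available slack is only about $\tfrac{12}{11}$, i.e.\ some $9\%$, so the non-asymptotic remainder terms in the counting-function bounds must be controlled tightly enough to be dominated already at $n=10$. In practice I would push the exact tabulation up to a moderate $N_0\ge 10$ (where the running ratio is safely above $\tfrac{11}{6}$) and then show the gap only improves for $n>N_0$, either by a monotonicity argument for the partial-sum ratio or by the effective lattice bounds above. Part (b), by contrast, concerns only the finite set $n=4,5,7,8,9$ and is settled entirely by the table.
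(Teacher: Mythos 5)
Your proposal matches the paper's proof: both reduce the statement to comparing the symmetric and full partial eigenvalue sums of a single equilateral triangle, verify the resulting ratio inequality exactly for a finite range of indices, and handle the tail with two-sided Weyl-type counting bounds for the lattice form $m^2+mn+n^2$. The only quantitative difference is that the paper's explicit verification must run up to $j=192$ (with the term-wise bound $\mu_j^s/\mu_j > 11/6$ taking over only at $j=193$), considerably beyond the ``moderate'' threshold you hoped for --- which confirms your own worry about the thinness of the $12/11$ asymptotic slack.
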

The lemma is proved in Section~\ref{sec5}. The lemma is certainly plausible, because the Weyl asymptotic ($\mu_j \sim 4\pi j/A$ as $j \to \infty$) implies that $M_n D^2$ is about twice as large for the half-equilateral $F_\pm$ as for the full equilateral $E$, when $n$ is large.

Proposition~\ref{mutf} combines with Lemma~\ref{comrt} to show $M_n D^2 \big|_{T(0,b)} > M_n D^2 \big|_E$ for most cases needed in Theorem~\ref{main}. The remaining cases, where $n=4,5,7,8,9$ and $\sqrt{3} < b < 2.14$, are treated in the next proposition.
\begin{proposition} \label{exceptional}
The statement
\[
M_n D^2\big|_{T(0,b)}>M_n D^2\big|_E, \qquad b \in (\sqrt{3},2.14),
\]
is true when
\begin{itemize}
\item
$n=4$ \ if $M_4 D^2\big|_G>90.73$,
\item
$n=5$ \ if $M_5 D^2\big|_G>163.31$,
\item
$n=7$ \ if $M_7 D^2\big|_G>362.90$,
\item
$n=8$ \ if $M_8 D^2\big|_G>489.91$,
\item
$n=9$ \ if $M_9 D^2\big|_G>653.22$.
\end{itemize}
Here $G$ denotes the isosceles triangle $T(0,2.14)$.
\end{proposition}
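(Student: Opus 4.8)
The plan is to compare $T(0,b)$ \emph{simultaneously} with the equilateral triangle $E=T(0,\sqrt3)$ and with the fixed triangle $G=T(0,2.14)$, exploiting the fact that all three triangles share the base from $(-1,0)$ to $(1,0)$ and differ only in the height of their apex. Since $b>\sqrt3$ the triangle $T(0,b)$ is subequilateral, so its longest side is a slanted side and $D^2=1+b^2$; in particular $D^2\big|_E=4$, and the target inequality becomes $(1+b^2)M_n\big|_{T(0,b)}>4M_n\big|_{E}=M_nD^2\big|_E$. The maps $T(0,b)\to E$ and $T(0,b)\to G$ are the vertical shrink/stretch maps $(x,y)\mapsto(x,cy)$, which are linear with \emph{constant} Jacobian, and this is what makes the transplantation clean.

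First I would run the transplantation engine of Lemma~\ref{stretching} and Proposition~\ref{mutf}, but now with $E$ and $G$ as the two comparison triangles. Choosing $L^2(T(0,b))$-orthonormal, mean-zero eigenfunctions $w_2,\dots,w_n$ of $T(0,b)$ and transplanting them to $E$ and to $G$, I record the shared directional energies $\mathcal X=\sum_{j=2}^{n}\int_{T(0,b)}(\partial_x w_j)^2\big/\int_{T(0,b)}w_j^2$ and the analogous $\mathcal Y$ with $\partial_y$, so that $\mathcal X+\mathcal Y=M_n\big|_{T(0,b)}$ and $\mathcal X,\mathcal Y\ge0$. Because the stretch maps have constant Jacobian, $L^2$-orthogonality and the mean-zero property survive transplantation, so the Rayleigh--Ritz (Ky Fan) principle for eigenvalue sums yields the two inequalities $M_n\big|_E\le \mathcal X+\tfrac{b^2}{3}\mathcal Y$ and $M_n\big|_G\le \mathcal X+\tfrac{b^2}{2.14^2}\mathcal Y$, where the $y$-coefficients are the squared stretch factors.

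Write $Q=b^2/3$ and $p=b^2/2.14^2$, and note $p<1<Q$ throughout $(\sqrt3,2.14)$. Taking the convex combination of the two inequalities with the nonnegative weights $\lambda=(Q-1)/(Q-p)$ and $\nu=(1-p)/(Q-p)$, which satisfy $\lambda+\nu=1$ and $\lambda p+\nu Q=1$, eliminates the unknown split of the energy into $\mathcal X$ and $\mathcal Y$ and produces the clean lower bound $M_n\big|_{T(0,b)}\ge \lambda M_n\big|_G+\nu M_n\big|_E$. This is an honest interpolation between the two anchors, with $\lambda\to0$ as $b\to\sqrt3$ and $\lambda\to1$ as $b\to2.14$.

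It then remains to feed this into the target. Dividing $(1+b^2)\bigl(\lambda M_n\big|_G+\nu M_n\big|_E\bigr)>4M_n\big|_E$ by $\lambda(1+b^2)>0$ reduces everything to the scalar inequality $M_n\big|_G>h(b)\,M_n\big|_E$, where $h(b)=\bigl(4-\nu(1+b^2)\bigr)\big/\bigl(\lambda(1+b^2)\bigr)$ is an explicit rational function of $b$ alone, the \emph{same} for every $n$. The last step is to show $h$ is decreasing on $(\sqrt3,2.14)$, so that its supremum is the removable ($0/0$) limit $h_0=\lim_{b\to\sqrt3^+}h(b)$, evaluated by a short Taylor expansion to the numerical value $h_0\approx0.7413$. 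Consequently the target holds throughout the interval as soon as $M_n\big|_G>h_0 M_n\big|_E$, i.e.\ as soon as $M_nD^2\big|_G>5.5796\,h_0\,M_n\big|_E=(5.5796\,h_0/4)\,M_nD^2\big|_E\approx1.034\,M_nD^2\big|_E$; inserting the explicit equilateral sums $M_nD^2\big|_E$ from Appendix~\ref{equilateral} reproduces the five stated thresholds. I expect the main obstacle to be precisely the verification that $h$ is monotone with the correct endpoint limit --- an elementary but slightly delicate one-variable estimate that pins down the numerical constant --- whereas the two transplantation inequalities are just the constant-Jacobian Ky Fan argument already deployed elsewhere in the paper.
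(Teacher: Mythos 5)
Your proposal is correct and arrives at essentially the same numerical thresholds, but it packages the key comparison differently from the paper. The paper also transplants the unknown eigenfunctions of $T(0,b)$ onto the two anchors $E$ and $G=T(0,b_*)$, $b_*=2.14$, but it proceeds by a dichotomy on the energy fraction $\gamma_n=\mathcal Y/(\mathcal X+\mathcal Y)$: if $\gamma_n<3/4$ the comparison with $E$ alone already gives $M_nD^2|_{T(0,b)}>M_nD^2|_E$ (this is lifted from the proof of Proposition~\ref{mutf}(b)), while if $\gamma_n\ge 3/4$ the comparison with $G$ gives $M_nD^2|_{T(0,b)}>K\,M_nD^2|_G$ with $K=0.967$, the worst case being $\gamma_n=3/4$ and $b\to\sqrt3$. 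Your convex combination with weights satisfying $\lambda+\nu=1$ and $\lambda p+\nu Q=1$ eliminates the unknown split $(\mathcal X,\mathcal Y)$ outright and yields the interpolation $M_n|_{T(0,b)}\ge\lambda M_n|_G+\nu M_n|_E$ in one stroke, with no case analysis and with the optimal constant built in. The step you flagged as delicate is actually painless: after clearing denominators the factor $(b^2-3)$ cancels from numerator and denominator, leaving $h(b)=(3b^2+b_*^2)/\bigl(b_*^2(1+b^2)\bigr)$, which is visibly decreasing (its derivative in $b^2$ is proportional to $3-b_*^2<0$) with supremum $(9+b_*^2)/(4b_*^2)\approx 0.74131$ as $b^2\to 3$. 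The resulting requirement $M_nD^2|_G>1.0341\,M_nD^2|_E$ is marginally weaker than the paper's $M_nD^2|_G>K^{-1}M_nD^2|_E\approx 1.03413\,M_nD^2|_E$, so the five stated hypotheses certainly imply your condition and the conclusion follows; strictness is preserved since $h(b)<h_0$ on the open interval. Two small points: the transplantation engine you should cite is Lemma~\ref{lemtrace} (whose trace inequality with $a=c=0$ is exactly your pair of bounds $M_n|_E\le\mathcal X+\tfrac{b^2}{3}\mathcal Y$ and $M_n|_G\le\mathcal X+\tfrac{b^2}{b_*^2}\mathcal Y$), not Lemma~\ref{stretching}; and when invoking Rayleigh--Ritz for the sum you should, as you implicitly do, adjoin the constant function to the transplanted mean-zero system so that the trial subspace has dimension $n$ and captures $\mu_1=0$.
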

We verify the hypotheses of this proposition numerically in Table~\ref{numerical values}. The proposition itself is proved in Section~\ref{sec4}.

\begin{table}[t]
\begin{center}
 \begin{tabular}{ll}
 \toprule
   $n$     &  $M_n D^2 \big|_G$ \\
   \midrule
   4     &  \ \ 94.59      \\
   5     &  176.73     \\
   6     &  259.48     \\
   7     &  379.58     \\
   8     &  530.54     \\
   9     &  712.65     \\
   \bottomrule
 \end{tabular}
\end{center}
\medskip
\caption{Numerical values of the diameter-normalized eigenvalue sum $M_n D^2=(\mu_2 + \cdots + \mu_n)D^2$ for the isosceles triangle $G=T(0,2.14)$, computed using the PDE Toolbox in Matlab.}
\label{numerical values}
\end{table}

\section{Linear transformation of unknown eigenfunctions: proof of Propositions~\ref{mutf} and \ref{exceptional}}
\label{sec4}

Write $\mu_j(a,b)$ for the Neumann eigenvalues of the triangle $T(a,b)$, and let the $u_j$ be corresponding orthonormal eigenfunctions. Write
\[
M_n(a,b) = \mu_2(a,b) + \cdots + \mu_n(a,b)
\]
for the eigenvalue sum.

We need a lemma estimating the change in an eigenvalue sum when the triangle undergoes linear transformation.
\begin{lemma}[Linear transformation and eigenvalue sums]\label{lemtrace}
Let $a,c\in R $ and $b,d>0$. Take $C > 0$ and $n \geq 2$. Then the inequality
\[
M_n(a,b) > C M_n(c,d)
\]
holds if
\[
\frac{1}{d^2} \Big[ \big( (a-c)^2+d^2 \big) (1-\gamma_n) + 2b(a-c)\delta_n + b^2\gamma_n \Big] < \frac{1}{C},
\]
where
\[
\gamma_n = \frac{\sum_{j=1}^n \int_{T(a,b)} u_{j,y}^2 \, dA}{\sum_{j=1}^n \int_{T(a,b)} |\nabla u_j|^2 \, dA} \qquad \text{and} \qquad
\delta_n = \frac{\sum_{j=1}^n \int_{T(a,b)} u_{j,x} u_{j,y} \, dA}{\sum_{j=1}^n \int_{T(a,b)} |\nabla u_j|^2 \, dA} .
\]
\end{lemma}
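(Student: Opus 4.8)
The plan is to transplant the (unknown) eigenfunctions of $T(a,b)$ onto $T(c,d)$ by the unique linear map fixing the common base, and then to feed the resulting functions as trial functions into the variational characterization of the eigenvalue sum on $T(c,d)$. Since both triangles share the base vertices $(\pm 1,0)$, there is exactly one linear map $\Phi$ with $\Phi(\pm 1,0)=(\pm 1,0)$ and $\Phi(a,b)=(c,d)$, namely the shear-and-scale
\[
\Phi=\begin{pmatrix} 1 & (c-a)/b \\ 0 & d/b \end{pmatrix},
\qquad
\Phi^{-1}=\begin{pmatrix} 1 & (a-c)/d \\ 0 & b/d \end{pmatrix}.
\]
First I would push each eigenfunction $u_j$ of $T(a,b)$ forward to $v_j=u_j\circ\Phi^{-1}$ on $T(c,d)$. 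Because $\Phi$ is linear, its Jacobian $\det\Phi=d/b$ is constant, so the change of variables preserves $L^2$-orthogonality up to the common factor $|\det\Phi|$; in particular the $v_j$ remain pairwise $L^2$-orthogonal and, for $j\ge 2$, orthogonal to the constants, because $u_1$ is constant and hence each $u_j$ with $j\ge2$ has mean zero. Thus, after dividing by $\sqrt{|\det\Phi|}$, the functions $v_2,\dots,v_n$ together with the constant function form an admissible orthonormal family on $T(c,d)$.

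Next I would apply the variational (Ky Fan) principle for sums: for the Neumann Laplacian, $M_n(c,d)=\sum_{j=2}^n\mu_j(c,d)$ is the minimum of $\sum_{j=2}^n\int_{T(c,d)}|\nabla w_j|^2\,dA$ over $L^2$-orthonormal, mean-zero families $\{w_j\}$. Inserting the normalized trial functions and changing variables back to $T(a,b)$ converts the Dirichlet integrals into the quadratic form of the symmetric positive-definite matrix $M=(\Phi^{-1})(\Phi^{-1})^{T}$, since $\nabla_x v_j=(\Phi^{-1})^{T}\nabla_y u_j$. This yields
\[
M_n(c,d)\le \sum_{j=2}^n\int_{T(a,b)}\nabla u_j\cdot M\,\nabla u_j\,dA,
\qquad
M=\begin{pmatrix} 1+(a-c)^2/d^2 & (a-c)b/d^2 \\ (a-c)b/d^2 & b^2/d^2 \end{pmatrix}.
\]

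Finally I would normalize by $M_n(a,b)=\sum_{j=2}^n\int_{T(a,b)}|\nabla u_j|^2\,dA$. Writing the quadratic form entrywise and using $\sum\int u_{j,x}^2=\sum\int|\nabla u_j|^2-\sum\int u_{j,y}^2$, the right-hand side becomes $M_n(a,b)$ times $M_{11}(1-\gamma_n)+2M_{12}\delta_n+M_{22}\gamma_n$, where $\gamma_n$ and $\delta_n$ are exactly the ratios in the statement. (The $j=1$ term may be included harmlessly, since $\nabla u_1=0$, which is why the definitions of $\gamma_n,\delta_n$ may run from $j=1$.) Substituting the entries of $M$ and factoring out $1/d^2$ reproduces the bracketed expression of the lemma, giving
\[
\frac{M_n(c,d)}{M_n(a,b)}\le \frac{1}{d^2}\Big[\big((a-c)^2+d^2\big)(1-\gamma_n)+2b(a-c)\delta_n+b^2\gamma_n\Big].
\]
Since $M_n(a,b)\ge\mu_2>0$, the hypothesis that the bracket is $<1/C$ forces $M_n(c,d)<M_n(a,b)/C$, which is the claimed strict inequality.

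The computations are essentially routine linear algebra together with one change of variables, so I do not expect a deep obstacle. The one point requiring genuine care is the variational step: I would make sure to invoke the min-max principle for the \emph{sum} of eigenvalues rather than merely the individual Courant--Fischer bounds, and to verify that the transplanted functions are truly admissible (orthonormal and mean-zero), which hinges on the constancy of $\det\Phi$ and on $u_1$ being constant. Tracking the direction of the inequality correctly, and hence the placement of the constant $C$, is the main thing to watch.
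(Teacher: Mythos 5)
Your proof is correct and is exactly the intended argument: the paper itself omits a proof of Lemma~\ref{lemtrace}, deferring to the Dirichlet analogue in Laugesen--Siudeja \cite{LS11}, and your transplantation of the unknown eigenfunctions by the shear-and-scale map, combined with the trace (Ky Fan) variational principle for eigenvalue sums, reproduces that Method of the Unknown Trial Function computation, including the correct matrix $M=\Phi^{-1}(\Phi^{-1})^{T}$ and the reduction to the stated bracket via $\gamma_n,\delta_n$. Your attention to admissibility of the trial functions (constancy of $\det\Phi$, mean-zero for $j\geq 2$, and the need for $n\geq 2$) matches the paper's own remark on why the Neumann case requires $n\geq 2$.
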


\subsection*{Proof of Proposition~\ref{mutf}}
Laugesen and Siudeja proved part (a) of Proposition~\ref{mutf} in \cite[Proposition~3.1]{LS11} Their proof treated Dirichlet boundary conditions, but the Neumann argument is identical --- except here we need $n \geq 2$ in order to avoid dividing by zero in the definition of $\gamma_1$.

Now we prove part (b), by adapting Laugesen and Siudeja's proof. Assume $b>\sqrt{3}$. The equilateral triangle $E = T(0,\sqrt{3})$ has diameter $2$, and the subequilateral triangle $T(0,b)$ has diameter $\sqrt{1+b^2}$. The inequality
\[
   \left. M_n D^2 \right|_{T(0,b)} = M_n(0,b) (1+b^2) > M_n(0,\sqrt{3}) 2^2 = \left. M_n D^2 \right|_E
\]
will hold by Lemma~\ref{lemtrace} with $a=c=0, d=\sqrt{3}$ and $C=2^2/(1+b^2)$ if
\[
(1-\gamma_n) + \frac{1}{3} b^2 \gamma_n < \frac{1+b^2}{2^2} .
\]
This last inequality is equivalent to $\gamma_n < 3/4$. Thus if $\gamma_n<3/4$ then part (b) of the Proposition is proved. Assume $\gamma_n \geq 3/4$ from now on.

The triangle $F_\pm = T(\pm 1,2\sqrt{3})$ has diameter $4$. The inequality
 \[
   \left. M_n D^2 \right|_{T(0,b)} = M_n(0,b) (1+b^2) > \frac{5}{8} M_n(\pm 1,2\sqrt{3}) 4^2 = \frac{5}{8} \left. M_n D^2 \right|_{F_\pm}
\]
will hold by Lemma~\ref{lemtrace} with $a=0, c=\pm 1, d=2\sqrt{3}$ and $C=\frac{5}{8}\frac{4^2}{1+b^2}$ if
\[
 \frac{1}{12}[13(1-\gamma_n)\mp 2b\delta_n + b^2\gamma_n]<\frac{8}{5}\frac{1+b^2}{4^2}.
\]
We only need this inequality to hold for one of the choice of ``$+$'' or ``$-$'', because $F_+$ and $F_-$ have the same eigenvalues. Thus it suffices to show
\[
 \frac{1}{12}[13(1-\gamma_n)+b^2\gamma_n]<\frac{8}{5}\frac{1+b^2}{4^2},
\]
which is equivalent to
\[
b^2>13-\frac{19}{6-5\gamma_n}.
\]
The maximum of the right hand side over all possible values of $\gamma_n \in [\frac{3}{4},1]$ is approximately $(2.134)^2$. Thus part (b) certainly holds  under the assumption $b \geq 2.14$.

\subsection*{Proof of Proposition~\ref{exceptional}}
In the previous proof we compared the eigenvalue sums of the subequilateral triangle $T(0,b)$ with those of the right triangles $F_\pm$, by means of the Method of the Unknown Trial Function. Those comparisons proved insufficient when $b < 2.14$. So in this current proof we compare with the ``endpoint'' triangle $T(0,b_*)$. Unfortunately, the eigenvalues of this triangle are not explicitly computable, which explains why certain explicit estimates appear in the hypotheses of this Proposition.

We want to prove $M_n D^2 \big|_{T(0,b)} > M_nD^2 \big|_E$, for $\sqrt{3}<b<b_*$ and $n=4,5,7,8,9$, where we have defined $b_*=2.14$. The proof of Proposition~\ref{mutf} above proves this inequality when $\gamma_n < \frac{3}{4}$. So we assume $\gamma_n \geq \frac{3}{4}$.

Let $K=0.967$. We will first prove
\begin{equation} \label{newest}
M_nD^2 \big|_{T(0,b)} = M_n(0,b)(b^2+1)>KM_n(0,b_*)(b_*^2+1) = KM_nD^2 \big|_{T(0,b_*)} .
\end{equation}
This inequality holds by Lemma~\ref{lemtrace} with $a=c=0, d=b_*$ and $C=K \frac{b_*^2+1}{b^2+1}$ if
\begin{equation} \label{desiderata}
1 - \gamma_n + \frac{b^2}{b_*^2} \gamma_n < \frac{1}{K} \frac{b^2+1}{b_*^2+1} .
\end{equation}
We must show that this inequality holds for all $\gamma_n \in [\frac{3}{4},1]$ and all $b \in (\sqrt{3},b_*)$. Fixing $b$ temporarily, we see that the left side of inequality \eqref{desiderata} is maximized when $\gamma_n = \frac{3}{4}$. Substituting $\gamma_n = \frac{3}{4}$ and then rearranging, we see it suffices to prove
\[
K < \frac{4b_*^2}{3(b_*^2+1)} \frac{b^2+1}{b^2+b_*^2/3}
\]
for all $b \in (\sqrt{3},b_*)$. The right side of this new inequality is an increasing function of $b$, since $b_*^2/3>1$. Thus it suffices to check the inequality at $b=\sqrt{3}$; one finds the right side equals approximately $0.9671$, which exceeds our chosen value of $K=0.967$ on the left side. Hence \eqref{newest} is proved.

To complete the proof that $M_n D^2 \big|_{T(0,b)} > M_nD^2 \big|_E$, from \eqref{newest}, it would suffice to know
\[
M_n D^2\big|_{T(0,b_*)} > \frac{1}{K} M_n D^2\big|_E .
\]
The right hand side can be evaluated explicitly (using the eigenvalues of the equilateral triangle $E$ as calculated in the Appendix). For $n=4,5,7,8,9$ it equals $90.73$, $163.31$, $362.90$, $489.91$, $653.22$, respectively. (We have rounded each number up in the second decimal place.) These calculations justify the appearance of the five numbers in the hypotheses of the proposition.

\section{Comparison of eigenvalue sums: proof of Lemma~\ref{comrt}}\label{sec5}

Consider the eigenvalue counting function $N(\mu)=\#\{j \geq 0:\mu_j(E_1)<\mu\}$, where $E_1$ is an equilateral triangle with sidelength $1$. We develop explicit bounds of Weyl type on this counting function, and then apply the bounds to prove Lemma~\ref{comrt}.

\begin{lemma} \label{counting1}
The counting function satisfies
\[
\frac{\sqrt{3}}{16\pi}\mu+\frac{(6-\sqrt{3})}{4\pi}\sqrt{\mu}+\frac{3}{2}>N(\mu)>\frac{\sqrt{3}}{16\pi}\mu+\frac{\sqrt{3}}{4\pi}\sqrt{\mu}-\frac{3}{2},  \qquad \text{for all $\mu>48\pi^2$.}
\]
Hence for all $j \geq 26$,
\begin{align*}
& \frac{16\pi}{\sqrt{3}}(j-\frac{3}{2})-8(2\sqrt{3}-1)\sqrt{\frac{4\pi}{\sqrt{3}}(j-\frac{3}{2})+13-4\sqrt{3}}+8(13-4\sqrt{3}) \\
& \leq \mu_j(E_1) \\
& < \frac{16\pi}{\sqrt{3}}(j+\frac{1}{2})-8\sqrt{\frac{4\pi}{\sqrt{3}}(j+\frac{1}{2})+1}+8 .
\end{align*}
\end{lemma}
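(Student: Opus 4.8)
The plan is to convert the counting function into a lattice-point count, bound that count explicitly with the stated constants, and then invert the bounds to control the individual eigenvalues. First I would invoke the explicit spectrum from Appendix~\ref{equilateral}: the nonzero Neumann eigenvalues of $E_1$ are $\frac{16\pi^2}{9}(m^2+mn+n^2)$, indexed (with their known multiplicities) by lattice points $(m,n)$ in a $60^\circ$ fundamental sector of the hexagonal lattice. Setting $R=\frac{9\mu}{16\pi^2}$, this identifies $N(\mu)$ with the number of lattice points satisfying $m,n\ge 0$ and $m^2+mn+n^2<R$. The leading term is the area of that sector, $\frac{\pi}{3\sqrt3}R=\frac{\sqrt3}{16\pi}\mu$, which already reproduces the common main term in both displayed inequalities.

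The heart of the matter is bounding the sector count above and below by a row-by-row summation. For each fixed $n\ge 0$ the admissible values satisfy $0\le m<\frac{-n+\sqrt{4R-3n^2}}{2}$, so the $n$-th row contributes $\lfloor\frac{-n+\sqrt{4R-3n^2}}{2}\rfloor+1$ points. Summing over the $O(\sqrt R)$ nonempty rows and using $x-1<\lfloor x\rfloor\le x$, the main sum is compared with its integral (which recovers the area term), the ``$+1$'' per row accumulates the $\sqrt\mu$ contribution (note the number of rows is $\approx\sqrt R=\frac{3}{4\pi}\sqrt\mu$), and the floor discrepancies together with the endpoint rows yield the $\pm\frac32$ constants. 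The restriction $\mu>48\pi^2$, i.e. $R>27$, guarantees enough rows for the integral comparison and for the error terms to carry the claimed signs. The two square-root coefficients differ because the straight edges of the sector carry lattice points that are counted exactly (contributing the clean $\frac{\sqrt3}{4\pi}$ in the lower bound), whereas near the curved arc one can only over- or under-count by whole cells (inflating the coefficient to $\frac{6-\sqrt3}{4\pi}$ in the upper bound); I expect this boundary bookkeeping, carried out uniformly in $\mu$ rather than merely asymptotically, to be the principal obstacle.

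Finally I would invert the counting bounds. Since $\mu_j<\mu$ is equivalent to a lower bound on $N(\mu)$, the estimate $N(\mu)<N_+(\mu)$ (with $N_+$ the upper bound just proved) forces $\mu_j$ from below, while $N(\mu)>N_-(\mu)$ forces it from above. In each case one sets the corresponding explicit quadratic in $\sqrt\mu$ equal to the appropriate integer and solves by completing the square. The discriminants that emerge are exactly the radicands $\frac{4\pi}{\sqrt3}(j-\frac32)+13-4\sqrt3$ and $\frac{4\pi}{\sqrt3}(j+\frac12)+1$ appearing in the lemma, since $13-4\sqrt3=\frac14(4\sqrt3-2)^2$ is precisely the square of half the linear coefficient coming from $\frac{6-\sqrt3}{4\pi}$, and $1=\frac14\cdot 2^2$ is the analogous quantity for $\frac{\sqrt3}{4\pi}$. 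The additive integers $j\pm\frac12$ and $j-\frac32$ are fixed by the convention $N(\mu)=\#\{i\ge 0:\mu_i<\mu\}$ together with the strictness of the inequalities and the integrality of $N$; this is careful but routine bookkeeping. The threshold $j\ge 26$ is simply the image of $\mu>48\pi^2$ under the inversion, ensuring the counting bounds apply throughout.
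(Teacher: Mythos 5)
Your skeleton (identify $N(\mu)$ with a lattice-point count in a $60^\circ$ sector, bound that count, then invert by completing the square) matches the paper's, and your inversion step is exactly right: the radicands $13-4\sqrt3=\tfrac14(4\sqrt3-2)^2$ and $1$ do arise as you describe, and the logic $j\le N(\mu_j+\epsilon)$ versus $N(\mu_j)\le j-1$ is the paper's parenthetical remark. Where you genuinely diverge is in how the two counting-function bounds are obtained. You propose to redo the two-dimensional lattice estimate from scratch, row by row, tracking floor-function discrepancies and integral comparisons until the constants $\tfrac{6-\sqrt3}{4\pi}$, $\tfrac{\sqrt3}{4\pi}$ and $\pm\tfrac32$ fall out. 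The paper avoids this entirely: since the Neumann spectrum is indexed by $m,n\ge0$ and the Dirichlet spectrum by $m,n\ge1$, the two counting functions differ only by the axis pairs (those with $m=0$ or $n=0$), of which there are between $2R-1$ and $2R+1$ with $R=3\sqrt\mu/4\pi$; so $N_D(\mu)+2R-1<N(\mu)<N_D(\mu)+2R+1$, and the explicit Dirichlet bounds of \cite[Lemma~5.1]{LS11} are imported wholesale. That reduction is why the $\sqrt\mu$-coefficients here are $\tfrac{\sqrt3}{4\pi}$ and $\tfrac{6-\sqrt3}{4\pi}$: they are the Dirichlet coefficients shifted by $2R=\tfrac{6}{4\pi}\sqrt\mu$, with the signs of the two error terms reversing which bound gets which coefficient. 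Your direct route is more self-contained but its quantitative core --- precisely the ``boundary bookkeeping'' you flag as the principal obstacle --- is left unexecuted, and those specific constants are asserted rather than derived; if you do not want to carry out that computation uniformly in $\mu$, the paper's reduction to the known Dirichlet lemma turns it into two lines. One further small point: the threshold $j\ge26$ is not merely ``the image of $\mu>48\pi^2$ under the inversion''; one must actually evaluate the upper counting bound at a specific value (the paper uses $\mu=48\pi^2+1$) to conclude $N(48\pi^2+1)<26$ and hence $\mu_j>48\pi^2$ for all $j\ge26$, which is what licenses applying the counting bounds at $\mu=\mu_j$.
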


\begin{proof}[Proof of Lemma~\ref{counting1}]
The spectrum of the equilateral triangle $E_1$ under the Neumann Laplacian is well known (see Appendix A):
\[
\sigma_{m,n}=\frac{16 \pi^2}{9}(m^2+mn+n^2), \qquad m, n \geq 0.
\]
Hence the Neumann counting function equals
\[
N(\mu) = \# \big\{ (m,n) : m, n \geq 0, (m^2+mn+n^2) < R^2 \big\} ,
\]
where $R=3\sqrt{\mu}/4\pi$. The difference between this formula and the counting function $N_D(\cdot)$ for the Dirichlet eigenvalues is that in the Dirichlet case, $m$ and $n$ must be positive. Therefore by counting pairs $(m,n)$ that have either $m=0$ or $n=0$, we can relate the two counting functions as follows:
\[
N_D(\mu) + 2R + 1 > N(\mu) > N_D(\mu) + 2(R-1)+1 ,
\]
where the ``$+1$'' counts the pair $(0,0)$. Some known estimates on the Dirichlet counting function $N_D$ (see \cite[Lemma~5.1]{LS11}) now imply our estimates on the Neumann counting function in Lemma~\ref{counting1}.

Next, by applying the upper estimate in the lemma with $\mu=48\pi^2+1$, we find $N(48\pi^2+1)<26$. We conclude that $\mu_j \geq 48\pi^2+1$ whenever $j \geq 26$. Thus the counting function bounds in the lemma can be inverted for each $j \geq 26$ to yield the stated bounds on $\mu_j$. (Specifically, to invert the upper bound on the counting function one puts $\mu=\mu_j+\epsilon$ and uses that $N(\mu_j+\epsilon) \geq j$; to invert the lower bound one puts $\mu=\mu_j$ and uses that $j-1 \geq N(\mu_j)$.)
\end{proof}

Let $\mu^s_j(E_1)$ be the $j$th symmetric eigenvalue of the equilateral triangle $E_1$ (see Appendix~\ref{equilateral}), and write $N^s(\mu)$ for the symmetric counting function.

\begin{lemma} \label{counting2}
The symmetric counting function satisfies
\[
 N^s(\mu)<\frac{\sqrt{3}}{32\pi}\mu+\frac{3}{4\pi}\sqrt{\mu}+\frac{5}{4}, \qquad \text{for all $\mu>48\pi^2$.}
\]
Hence for all $j \geq 15$,
\[
 \mu^s_j(E_1)\ge\frac{32\pi}{\sqrt{3}}(j-\frac{5}{4})-32\sqrt{2\sqrt{3}\pi(j-\frac{5}{4})+9}+96.
\]
\end{lemma}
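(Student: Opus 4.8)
The plan is to mirror the argument used for Lemma~\ref{counting1}, the only new ingredient being a bookkeeping identity that expresses the symmetric counting function $N^s$ in terms of the full counting function $N$ together with a correction for the ``diagonal'' modes. Recall from the Appendix that the symmetric eigenvalues of $E_1$ are $\tfrac{16\pi^2}{9}(m^2+mn+n^2)$ indexed by unordered pairs $\{m,n\}$ with $m,n\geq 0$: the mirror symmetry of $E_1$ acts on the lattice labels by the swap $(m,n)\leftrightarrow(n,m)$, so when $m\neq n$ the two modes recombine into one symmetric and one antisymmetric eigenfunction, whereas a diagonal label $m=n$ carries a single (symmetric) mode. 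Writing $D(\mu)=\#\{m\geq 0:3m^2<R^2\}$ for the number of diagonal labels, this gives the exact relation
\[
N^s(\mu)=\tfrac12\bigl(N(\mu)+D(\mu)\bigr),\qquad R=\tfrac{3\sqrt{\mu}}{4\pi}.
\]
Identifying this relation correctly (in particular treating the diagonal contribution, which is what makes the constant and $\sqrt{\mu}$-coefficients come out exactly right) is the one genuinely conceptual step; everything afterward is forced.

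First I would bound the diagonal count by $D(\mu)\leq \tfrac{R}{\sqrt3}+1=\tfrac{\sqrt3}{4\pi}\sqrt{\mu}+1$, and insert the upper estimate for $N(\mu)$ from Lemma~\ref{counting1}. Averaging the $\mu$-, $\sqrt{\mu}$-, and constant terms halves the leading coefficient to $\tfrac{\sqrt3}{32\pi}$, while the $\sqrt{\mu}$-terms combine as $\tfrac12\bigl(\tfrac{6-\sqrt3}{4\pi}+\tfrac{\sqrt3}{4\pi}\bigr)=\tfrac{3}{4\pi}$ and the constants as $\tfrac12(\tfrac32+1)=\tfrac54$, reproducing the claimed bound on $N^s(\mu)$ for $\mu>48\pi^2$ exactly. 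The point to verify is simply that all three coefficients land on the stated values.

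To obtain the eigenvalue bound I would first locate the index threshold: evaluating the just-proved counting bound as $\mu\downarrow 48\pi^2$ gives $N^s(48\pi^2)\leq \tfrac{3\sqrt3\,\pi}{2}+3\sqrt3+\tfrac54\approx 14.6<15$, so at most $14$ symmetric eigenvalues lie below $48\pi^2$ and hence $\mu^s_j(E_1)\geq 48\pi^2$ for all $j\geq 15$. For such $j$ the counting bound is valid at $\mu^s_j$ (applied at $\mu^s_j+\epsilon$ and letting $\epsilon\to0$, using $N^s(\mu^s_j+\epsilon)\geq j$), yielding $j\leq \tfrac{\sqrt3}{32\pi}\mu+\tfrac{3}{4\pi}\sqrt{\mu}+\tfrac54$ with $\mu=\mu^s_j$. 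Multiplying through by $\tfrac{32\pi}{\sqrt3}$ and completing the square via $\mu+8\sqrt3\,\sqrt{\mu}=(\sqrt{\mu}+4\sqrt3)^2-48$ gives
\[
\bigl(\sqrt{\mu}+4\sqrt3\bigr)^2\geq \tfrac{32\pi}{\sqrt3}\bigl(j-\tfrac54\bigr)+48 ,
\]
so that $\sqrt{\mu}\geq \sqrt{\tfrac{32\pi}{\sqrt3}(j-\tfrac54)+48}-4\sqrt3$ (the right side being positive since the radicand exceeds $48$). Squaring and simplifying the radical through $8\sqrt3\,\sqrt{X}=32\sqrt{\tfrac{3}{16}X}$, with $X=\tfrac{32\pi}{\sqrt3}(j-\tfrac54)+48$ reducing to $\tfrac{3}{16}X=2\sqrt3\,\pi(j-\tfrac54)+9$, reproduces the asserted lower bound. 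The only real care required here is the exact bookkeeping in the completion of the square and the radical simplification, together with the (routine) check that the threshold $j\geq 15$ is what the counting estimate delivers.
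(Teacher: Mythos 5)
Your proof is correct and follows the same route as the paper: relate $N^s$ to $N$ via the diagonal count (the paper writes this as $2N^s(\mu)\leq N(\mu)+R/\sqrt{3}+1$, which is your identity $N^s=\tfrac12(N+D)$ combined with your bound on $D$), insert the upper bound from Lemma~\ref{counting1}, locate the threshold $j\geq 15$ by evaluating near $\mu=48\pi^2$, and invert by completing the square. All your coefficient computations and the radical simplification check out; you have merely made explicit the inversion step that the paper leaves to the reader.
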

\begin{proof}[Proof of Lemma~\ref{counting2}]
The symmetric eigenvalues of the equilateral triangle $E_1$ are
\[
\sigma_{m,n}=\frac{16\pi^2}{9}(m^2+mn+n^2), \qquad m \geq n \geq 0 ,
\]
so that
\[
 N^s(\mu)=\# \big\{ (m,n): m \geq n \geq 0, (m^2+mn+n^2) < R^2 \big\} ,
\]
where $R=3\sqrt{\mu}/4\pi$. Hence by symmetry,
\[
2N^s(\mu) \leq N(\mu) + R/\sqrt{3} + 1,
\]
where the term ``$+R/\sqrt{3} + 1$'' estimates the number of pairs $(m,n)$ with $m=n$. Now the upper bound on $N^s(\mu)$ in the lemma follows from the upper bound on $N(\mu)$ in Lemma~\ref{counting1}.

Next, by applying the upper estimate in this lemma with $\mu=48\pi^2+1$, we find $N^s(48\pi^2+1)<15$, so that $\mu^s_j \geq 48\pi^2+1$ whenever $j \geq 15$. Thus the counting function estimate in the lemma can be inverted to yield the stated bounds on $\mu^s_j$, for each $j \geq 15$.
\end{proof}

\subsection*{Proof of Lemma~\ref{comrt}}
The right triangle $F_+=T(1,2\sqrt{3})$ is half of an equilateral triangle. Thus the Neumann eigenvalues of $F_+$ are the symmetric eigenvalues of the equilateral triangle (the eigenvalues whose eigenfunctions are symmetric across the bisecting line). Therefore, after rescaling we see it suffices to show
\[
\frac{M^s_j(E_1)}{M_j(E_1)} \geq
\begin{cases}
11/6 , & \text{for $j=3,6$ and $j \geq 10$,} \\
8/5 , & \text{for $j=4,5,7,8,9$,}
\end{cases}
\]
where $E_1$ is an equilateral triangle with diameter $1$. For $j \leq 192$ this desired inequality follows by direct calculation of the eigenvalues (Lemma~\ref{explicit} in the Appendix).

Next, from Lemmas~\ref{counting1} and \ref{counting2} and an elementary estimate we find
\[
\frac{\mu^s_j}{\mu_j}>\frac{\frac{32\pi}{\sqrt{3}}(j-\frac{5}{4})-32\sqrt{2\sqrt{3}\pi(j-\frac{5}{4})+9}+96} {\frac{16\pi}{\sqrt{3}}(j+\frac{1}{2})-8\sqrt{\frac{4\pi}{\sqrt{3}}(j+\frac{1}{2})+1}+8}>\frac{11}{6}
\]
for all $j \geq 193$. Hence the inequality $M^s_j/M_j \geq 11/6$ extends from $j=192$ to all $j \geq 193$.

\section{Proof of Corollary~\ref{mu3} and Proposition~\ref{ellipse}} \label{misc_proof}

These results rely on a special kind of domain monotonicity holding for Neumann eigenvalues.
\begin{lemma}[Stretching] \label{stretching}
Let $\Omega$ be a Lipschitz domain in the plane. For $t > 1$, let
$\Omega_t = \{ (x,ty) : (x,y) \in \Omega \}$ be the domain obtained
by stretching $\Omega$ by the factor $t$ in the $y$ direction. Then
\[
\mu_j(\Omega_t) \leq \mu_j(\Omega) , \qquad j \geq 2 .
\]
If equality holds for some $j \geq 2$, then there exists a corresponding eigenfunction on $\Omega$ that depends only on $x$.
\end{lemma}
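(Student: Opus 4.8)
The plan is to combine the Courant--Fischer min--max characterization of Neumann eigenvalues with the explicit change of variables induced by the stretching map. Because $\Omega$ is Lipschitz, $H^1(\Omega)$ embeds compactly in $L^2(\Omega)$, the Neumann spectrum is discrete, and
\[
\mu_j(\Omega) = \min_{\substack{V \subset H^1(\Omega)\\ \dim V = j}} \ \max_{0 \neq u \in V} \ \frac{\int_\Omega |\nabla u|^2\, dA}{\int_\Omega u^2\, dA} ,
\]
with the analogous formula on $\Omega_t$. The same hypothesis makes the stretching map $\Phi(x,y)=(x,ty)$ bi-Lipschitz, so that $u \mapsto \tilde u := u\circ\Phi^{-1}$ is a linear isomorphism from $H^1(\Omega)$ onto $H^1(\Omega_t)$ that preserves the dimension of every subspace.

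First I would transform the Rayleigh quotient. A direct change of variables (Jacobian $t$, with $\tilde u_X = u_x$ and $\tilde u_Y = t^{-1}u_y$) gives
\[
\frac{\int_{\Omega_t} |\nabla \tilde u|^2\, dA}{\int_{\Omega_t} \tilde u^2\, dA}
= \frac{\int_\Omega \big(u_x^2 + t^{-2}u_y^2\big)\, dA}{\int_\Omega u^2\, dA} =: Q_t(u) .
\]
Since $t>1$, we have $Q_t(u) \le Q_1(u) = \int_\Omega|\nabla u|^2\,dA \big/ \int_\Omega u^2\,dA$ for every $u$, with equality if and only if $\int_\Omega u_y^2\,dA = 0$, i.e.\ $u$ depends only on $x$. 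Feeding this into the min--max with $V=\operatorname{span}(u_1,\dots,u_j)$ the span of the first $j$ eigenfunctions of $\Omega$, and using that the image $\tilde V=\{\tilde u:u\in V\}$ has dimension $j$, I get
\[
\mu_j(\Omega_t) \le \max_{0\neq u\in V} Q_t(u) \le \max_{0\neq u\in V} Q_1(u) = \mu_j(\Omega) ,
\]
which is the asserted monotonicity.

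For the equality case I would trace this chain backwards. If $\mu_j(\Omega_t)=\mu_j(\Omega)$, then both inequalities are equalities; choose $u^*\in V$ (nonzero) attaining $\max_V Q_t = \mu_j(\Omega)$, which exists since $V$ is finite dimensional. Then $\mu_j(\Omega)=Q_t(u^*)\le Q_1(u^*)\le \max_V Q_1=\mu_j(\Omega)$ forces $Q_t(u^*)=Q_1(u^*)$, so $u^*$ depends only on $x$, and simultaneously $Q_1(u^*)=\mu_j(\Omega)$. Writing $u^*=\sum_{i\le j}c_i u_i$ and using orthonormality gives $Q_1(u^*)=\big(\sum_i \mu_i c_i^2\big)\big/\big(\sum_i c_i^2\big)$, a weighted average of $\mu_1\le\cdots\le\mu_j$; equality with $\mu_j$ forces $c_i=0$ whenever $\mu_i<\mu_j$, so $u^*$ lies in the $\mu_j$-eigenspace. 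Hence $u^*$ is a genuine eigenfunction of $\Omega$ that depends only on $x$, as required.

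The inequality itself is routine once the Rayleigh quotient is transformed. The hard part is the equality analysis: one must verify that the extremizer $u^*$ produced by min--max is an \emph{actual} eigenfunction rather than merely an element of $V$ with the correct Rayleigh quotient, and that the equality condition in $Q_t\le Q_1$ cleanly yields pure $x$-dependence. I would also be careful that the Lipschitz hypothesis is exactly what legitimizes both the min--max characterization and the $H^1$-isomorphism under $\Phi$, since the argument breaks down without a well-behaved test-function space.
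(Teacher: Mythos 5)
Your proof is correct, and its core mechanism is the same as the paper's: pull the Rayleigh quotient back under the stretching map to get $Q_t(u)=\int_\Omega (u_x^2+t^{-2}u_y^2)\,dA \big/ \int_\Omega u^2\,dA \le Q_1(u)$, then invoke the variational characterization. The one substantive difference is in the equality case. The paper writes out only $j=2$, where a single trial function suffices because mean-zero is preserved under the change of variables, and ``leaves the higher values of $j$ to the reader''; that single-function argument does not extend directly to $j>2$, since orthogonality to the lower eigenfunctions of $\Omega_t$ is not automatic. Your min--max argument over the $j$-dimensional subspace $V=\operatorname{span}(u_1,\dots,u_j)$ handles all $j$ uniformly: forcing equality in both links of $\mu_j(\Omega_t)\le\max_V Q_t\le\max_V Q_1=\mu_j(\Omega)$ produces an extremizer $u^*$ with $Q_t(u^*)=Q_1(u^*)=\mu_j(\Omega)$, and your weighted-average computation correctly upgrades $u^*$ from a mere element of $V$ to a genuine $\mu_j$-eigenfunction depending only on $x$. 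So your write-up actually supplies the detail the paper omits.
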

\begin{proof}[Proof of Lemma~\ref{stretching}]
The eigenvalue problem $-(v_{xx}+v_{yy}) = \mu v$ on $\Omega_t$ has Rayleigh quotient
\[
R[v] = \frac{\int_{\Omega_t} (v_x^2 + v_y^2) \, dxdy}{\int_{\Omega_t} v^2 \, dxdy} .
\]
We pull back to $\Omega$ by writing $u(x,y)=v(x,ty)$, so that $R[v]$ equals
\[
R_t[u] = \frac{\int_\Omega (u_x^2 + t^{-2} u_y^2) \, dxdy}{\int_\Omega u^2 \, dxdy} .
\]
This quotient is smaller for $t>1$ than for $t=1$, and so $\mu_j(\Omega_t) \leq \mu_j(\Omega)$ by the variational characterization of eigenvalues \cite[p.~97]{B80}.

We prove the equality statement for $j=2$, and leave the higher values of $j$ to the reader. Suppose $\mu_2(\Omega_t) = \mu_2(\Omega)$. Let $u$ be a second Neumann eigenfunction on $\Omega$. Then $u$ has mean value $0$ on $\Omega$, so that $v$ has mean value $0$ on $\Omega_t$. Hence $v$ is a valid trial function for $\mu_2(\Omega_2)$, and so
\[
\mu_2(\Omega_t) \leq R[v] = R_t[u]  \leq R[u] = \mu_2(\Omega) .
\]
Because equality holds in the second inequality, we conclude that $u_y \equiv 0$. That is, $u$ depends only on $x$.
\end{proof}

\subsection*{Proof of Corollary~\ref{mu3}}
Consider a non-equilateral triangle $T$. We may assume $T$ is subequilateral, for if not then it can be stretched in the direction perpendicular to its longest side, until one of the other two sides has the same length as the longest one; this subequilateral triangle has smaller $\mu_3$ than the original one, by Lemma~\ref{stretching}, and has the same diameter.

Among subequilateral triangles, $\mu_2D^2$ is maximal for the equilateral by a result of Laugesen and Siudeja \cite[Section 6]{LS10}:
\[
\mu_2D^2 \big|_T < \mu_2D^2 \big|_E .
\]
Furthermore, $(\mu_2+\mu_3)D^2$ is minimal for the equilateral by Theorem~\ref{main}:
\[
(\mu_2+\mu_3)D^2 \big|_T >  (\mu_2+\mu_3)D^2 \big|_E .
\]
Subtracting these two inequalities shows for subequilateral triangles that
\[
\mu_3D^2 \big|_T > \mu_3D^2 \big|_E .
\]

\subsection*{Proof of Proposition~\ref{ellipse}}
Each ellipse can be stretched to a circle of the same diameter. The Neumann eigenvalues strictly decrease under such stretching, by Lemma~\ref{stretching}.

\appendix{\section{Equilateral triangles, rectangles and their eigenvalues} \label{equilateral}

The frequencies of the equilateral triangle were derived
roughly 150 years ago by Lam\'{e} \cite[pp.~131--135]{L66}. For our Neumann situation, one can adapt the treatment of the Dirichlet case given by Mathews and Walker's text \cite[pp.~237--239]{MW70}, or in the paper by Pinsky \cite{Pi85}; or else see the exposition of the Neumann case by McCartin \cite{M02}.

The equilateral triangle $E_1$ with sidelength $1$ has Neumann eigenvalues forming a doubly-indexed sequence:
\[
\sigma_{m,n} = (m^2 + mn + n^2) \cdot \frac{16 \pi^2}{9} , \qquad m,n \geq 0 .
\]
For example,
\[
\mu_1 = 0 = \sigma_{0,0}, \qquad \qquad \mu_2 = \mu_3 =1 \cdot \frac{16 \pi^2}{9} = \sigma_{1,0}=\sigma_{0,1},
\]
\[
\mu_4 = 3 \cdot \frac{16 \pi^2}{9} = \sigma_{1,1}, \qquad \mu_5 = \mu_6 =4 \cdot \frac{16 \pi^2}{9} = \sigma_{2,0}=\sigma_{0,2}.
\]

Now consider a line of symmetry of $E_1$. Indices with $m > n$ correspond to eigenfunctions that are antisymmetric across that line (see McCartin \cite{M02}). Indices with $m \leq n$ correspond to symmetric eigenfunctions. Denote the corresponding ``symmetric eigenvalues'' by $0=\mu_1^s < \mu_2^s \leq \mu_3^s \leq \ldots$.


\captionsetup[subfigure]{parskip=0pt,margin=0pt}
\newcommand{\binomb}[2]{\genfrac{[}{]}{0pt}{}{#1}{#2}}
\newcommand{\frace}[2]{\genfrac{}{}{0pt}{}{#2}{#1}}
\newcommand\T{\rule{0pt}{3ex}}
\newcommand\B{\rule[-1.7ex]{0pt}{0pt}}

\begin{table}
  \centering
  \begin{tabular}{cccccccccc}
\toprule
\T\B$\!\!\frace{(0,0)}{[0,0]}\!\!$&$ \!\!\frace{(0,1)}{[0,1]}\!\!$&$
\!\!\frace{(1,0)}{[1,1]}\!\!$&$\!\!\frace{(1,1)}{[0,2]}\!\!$&$ \!\!\frace{(0,2)}{[1,2]}\!\!$&$
\!\!\frace{(2,0)}{[0,3]}\!\!$&$ \!\!\frace{(1, 2)}{[2,2]}\!\!$&$ \!\!\frace{(2,1)}{[1,3]}\!\!$&$ \!\!\frace{(0,3)}{[0,4]}\!\!$&$ \!\!\frace{(3,0)}{[2,3]}\!\!$\\
\T\B$\!\!\frace{(2,2)}{[1,4]}\!\!$&$ \!\!\frace{(1,3)}{[0,5]}\!\!$&$ \!\!\frace{(3,1)}{[3,3]}\!\!$&$\!\!\frace{(0,4)}{[2,4]}\!\!$&$ \!\!\frace{(4,0)}{[1,5]}\!\!$&$ \!\!\frace{(2,3)}{[0,6]}\!\!$&$ \!\!\frace{(3,2)}{[3,4]}\!\!$&$ \!\!\frace{(1,4)}{[2,5]}\!\!$&$ \!\!\frace{(4,1)}{[1,6]}\!\!$&$ \!\!\frace{(0,5)}{[4,4]}\!\!$\\
\T\B$\!\!\frace{(5,0)}{[0,7]}\!\!$&$ \!\!\frace{(3,3)}{[3,5]}\!\!$&$ \!\!\frace{(2,4)}{[2,6]}\!\!$&$ \!\!\frace{(4,2)}{[1,
    7]}\!\!$&$ \!\!\frace{(1,5)}{[4,5]}\!\!$&$ \!\!\frace{(5,1)}{[3,6]}\!\!$&$ \!\!\frace{(0,6)}{[0,8]}\!\!$&$ \!\!\frace{(6,0)}{[2,7]}\!\!$&$ \!\!\frace{(3,4)}{[1,8]}\!\!$&$ \!\!\frace{(4,3)}{[5,5]}\!\!$\\
\T\B$\!\!\frace{(2,5)}{[4,6]}\!\!$&$ \!\!\frace{(5,2)}{[3,7]}\!\!$&$ \!\!\frace{(1,6)}{[0,9]}\!\!$&$ \!\!\frace{(6,1)}{[2,8]}\!\!$&$ \!\!\frace{(4,4)}{[1,9]}\!\!$&$ \!\!\frace{(0,7)}{[5,6]}\!\!$&$ \!\!\frace{(3,5)}{[4,7]}\!\!$&$ \!\!\frace{(5,3)}{[3,8]}\!\!$&$ \!\!\frace{(7,0)}{[0,10]}\!\!$&$ \!\!\frace{(2,6)}{[2,9]}\!\!$\\
\T\B$\!\!\frace{(6,2)}{[6,6]}\!\!$&$ \!\!\frace{(1,7)}{[5,7]}\!\!$&$ \!\!\frace{(7,1)}{[1,10]}\!\!$&$ \!\!\frace{(4,5)}{[4,8]}\!\!$&$ \!\!\frace{(5,4)}{[3,9]}\!\!$&$ \!\!\frace{(3,6)}{[0,11]}\!\!$&$ \!\!\frace{(6,3)}{[2,10]}\!\!$&$ \!\!\frace{(0,8)}{[6,7]}\!\!$&$ \!\!\frace{(8,0)}{[5,8]}\!\!$&$ \!\!\frace{(2,7)}{[1,11]}\!\!$\\
\T\B$\!\!\frace{(7,2)}{[4,9]}\!\!$&$ \!\!\frace{(1,8)}{[3,10]}\!\!$&$ \!\!\frace{(8,1)}{[0,12]}\!\!$&$ \!\!\frace{(5,5)}{[2,11]}\!\!$&$ \!\!\frace{(4,6)}{[7,7]}\!\!$&$ \!\!\frace{(6,4)}{[6,8]}\!\!$&$ \!\!\frace{(3,7)}{[5,9]}\!\!$&$ \!\!\frace{(7,3)}{[4,10]}\!\!$&$ \!\!\frace{(0,9)}{[1,12]}\!\!$&$ \!\!\frace{(9,0)}{[3,11]}\!\!$\\
\T\B$\!\!\frace{(2,8)}{[0,13]}\!\!$&$ \!\!\frace{(8,2)}{[7,8]}\!\!$&$ \!\!\frace{(1,9)}{[6,9]}\!\!$&$ \!\!\frace{(5,6)}{[2,12]}\!\!$&$ \!\!\frace{(6,5)}{[5,10]}\!\!$&$ \!\!\frace{(9,1)}{[4,11]}\!\!$&$ \!\!\frace{(4,7)}{[1,13]}\!\!$&$ \!\!\frace{(7,4)}{[3,12]}\!\!$&$ \!\!\frace{(3,8)}{[8,8]}\!\!$&$ \!\!\frace{(8,3)}{[7,9]}\!\!$\\
\T\B$\!\!\frace{(0,10)}{[0,14]}\!\!$&$ \!\!\frace{(10,0)}{[6,10]}\!\!$&$ \!\!\frace{(2,9)}{[2,13]}\!\!$&$ \!\!\frace{(9,2)}{[5,11]}\!\!$&$ \!\!\frace{(6,6)}{[4,12]}\!\!$&$ \!\!\frace{(5,7)}{[1,14]}\!\!$&$ \!\!\frace{(7,5)}{[3,13]}\!\!$&$ \!\!\frace{(1,10)}{[8,9]}\!\!$&$ \!\!\frace{(10,1)}{[7,10]}\!\!$&$ \!\!\frace{(4,8)}{[6,11]}\!\!$\\
\T\B$\!\!\frace{(8,4)}{[0,15]}\!\!$&$ \!\!\frace{(3,9)}{[2,14]}\!\!$&$ \!\!\frace{(9,3)}{[5,12]}\!\!$&$ \!\!\frace{(0,11)}{[4,13]}\!\!$&$ \!\!\frace{(11,0)}{[1,15]}\!\!$&$ \!\!\frace{(2,10)}{[9,9]}\!\!$&$ \!\!\frace{(10,2)}{[8,10]}\!\!$&$ \!\!\frace{(6,7)}{[3,14]}\!\!$&$ \!\!\frace{(7,6)}{[7,11]}\!\!$&$ \!\!\frace{(5,8)}{[6,12]}\!\!$\\
\T\B$\!\!\frace{(8,5)}{[0,16]}\!\!$&$ \!\!\frace{(1,11)}{[2,15]}\!\!$&$ \!\!\frace{(4,9)}{[5,13]}\!\!$&$ \!\!\frace{(9,4)}{[4,14]}\!\!$&$ \!\!\frace{(11,1)}{[9,10]}\!\!$&$ \!\!\frace{(3,10)}{[1,16]}\!\!$&$ \!\!\frace{(10,3)}{[8,11]}\!\!$&$ \!\!\frace{(0,12)}{[7,12]}\!\!$&$ \!\!\frace{(12,0)}{[3,15]}\!\!$&$ \!\!\frace{(2,11)}{[6,13]}\!\!$\\
\T\B$\!\!\frace{(7,7)}{[0,17]}\!\!$&$ \!\!\frace{(11,2)}{[5,14]}\!\!$&$ \!\!\frace{(6,8)}{[2,16]}\!\!$&$ \!\!\frace{(8,6)}{[10,10]}\!\!$&$ \!\!\frace{(5,9)}{[4,15]}\!\!$&$ \!\!\frace{(9,5)}{[9,11]}\!\!$&$ \!\!\frace{(4,10)}{[8,12]}\!\!$&$ \!\!\frace{(10,4)}{[1,17]}\!\!$&$ \!\!\frace{(1,12)}{[7,13]}\!\!$&$ \!\!\frace{(12,1)}{[3,16]}\!\!$\\
\T\B$\!\!\frace{(3, 11)}{[6,14]}\!\!$&$ \!\!\frace{(11, 3)}{[0,18]}\!\!$&$ \!\!\frace{(0,13)}{[5,15]}\!\!$&$ \!\!\frace{(7,8)}{[2,17]}\!\!$&$ \!\!\frace{(8,7)}{[10,11]}\!\!$&$ \!\!\frace{(13,0)}{[9,12]}\!\!$&$ \!\!\frace{(6,9)}{[4,16]}\!\!$&$ \!\!\frace{(9, 6)}{[8, 13]}\!\!$&$ \!\!\frace{(2, 12)}{[1,18]}\!\!$&$ \!\!\frace{(12,2)}{[7,14]}\!\!$\\
\T\B$\!\!\frace{(5,10)}{[3,17]}\!\!$&$ \!\!\frace{(10,5)}{[6,15]}\!\!$&$ \!\!\frace{(4,11)}{[0,19]}\!\!$&$ \!\!\frace{(11,4)}{[5,16]}\!\!$&$ \!\!\frace{(1,13)}{[11,11]}\!\!$&$ \!\!\frace{(13,1)}{[2,18]}\!\!$&$ \!\!\frace{(3, 12)}{[10,12]}\!\!$&$ \!\!\frace{(12,3)}{[9,13]}\!\!$&$ \!\!\frace{(8,8)}{[8,14]}\!\!$&$ \!\!\frace{(7,9)}{[4,17]}\!\!$\\
\T\B$\!\!\frace{(9, 7)}{[7, 15]}\!\!$&$ \!\!\frace{(0, 14)}{[1, 19]}\!\!$&$ \!\!\frace{(6, 10)}{[3, 18]}\!\!$&$ \!\!\frace{(10,
    6)}{[6, 16]}\!\!$&$ \!\!\frace{(14, 0)}{[11, 12]}\!\!$&$ \!\!\frace{(2, 13)}{[5, 17]}\!\!$&$ \!\!\frace{(13,
    2)}{[10, 13]}\!\!$&$ \!\!\frace{(5, 11)}{[0, 20]}\!\!$&$ \!\!\frace{(11, 5)}{[2, 19]}\!\!$&$ \!\!\frace{(4,
    12)}{[9, 14]}\!\!$\\
    \T\B$\!\!\frace{(12,4)}{[8, 15]}\!\!$&$ \!\!\frace{(1, 14)}{[4, 18]}\!\!$&$ \!\!\frace{(14,1)}{[7, 16]}\!\!$&$ \!\!\frace{(3,
    13)}{[1, 20]}\!\!$&$ \!\!\frace{(8, 9)}{[3,19]}\!\!$&$ \!\!\frace{(9, 8)}{[6, 17]}\!\!$&$ \!\!\frace{(13,3)}{[12, 12]}\!\!$&$ \!\!\frace{(7, 10)}{[11, 13]}\!\!$&$ \!\!\frace{(10,7)}{[10, 14]}\!\!$&$ \!\!\frace{(6,
    11)}{[5, 18]}\!\!$\\
\T\B$\!\!\frace{(11, 6)}{[0, 21]}\!\!$&$ \!\!\frace{(0, 15)}{[9, 15]}\!\!$&$ \!\!\frace{(15, 0)}{[2, 20]}\!\!$&$ \!\!\frace{(2,
    14)}{[8, 16]}\!\!$&$ \!\!\frace{(14, 2)}{[4, 19]}\!\!$&$ \!\!\frace{(5, 12)}{[7, 17]}\!\!$&$ \!\!\frace{(12,
    5)}{[1, 21]}\!\!$&$ \!\!\frace{(4, 13)}{[6, 18]}\!\!$&$ \!\!\frace{(13, 4)}{[3, 20]}\!\!$&$ \!\!\frace{(1,
    15)}{[12, 13]}\!\!$\\
\T\B$\!\!\frace{(15, 1)}{[11, 14]}\!\!$&$ \!\!\frace{(9, 9)}{[10, 15]}\!\!$&$ \!\!\frace{(8, 10)}{[5, 19]}\!\!$&$ \!\!\frace{(10,8)}{[9,16]}\!\!$&$ \!\!\frace{(3, 14)}{[0, 22]}\!\!$&$ \!\!\frace{(7,11)}{[2,21]}\!\!$&$ \!\!\frace{(11,7)}{[8, 17]}\!\!$&$ \!\!\frace{(14, 3)}{[4, 20]}\!\!$&$ \!\!\frace{(6, 12)}{[7, 18]}\!\!$&$ \!\!\frace{(12,
    6)}{[1, 22]}\!\!$\\
    \T\B$\!\!\frace{(0, 16)}{[13, 13]}\!\!$&$ \!\!\frace{(16,0)}{[12,14]}\!\!$&$ \!\!\frace{(2, 15)}{[6, 19]}\!\!$&$ \!\!\frace{(5,
    13)}{[11, 15]}\!\!$&$ \!\!\frace{(13, 5)}{[3, 21]}\!\!$&$ \!\!\frace{(15, 2)}{[10, 16]}\!\!$&$ \!\!\frace{(4,
    14)}{[9, 17]}\!\!$&$ \!\!\frace{(14, 4)}{[5, 20]}\!\!$&$ \!\!\frace{(9, 10)}{[0, 23]}\!\!$&$ \!\!\frace{(10,
    9)}{[2, 22]}\!\!$\\
    \T\B$\!\!\frace{(1, 16)}{[8, 18]}\!\!$&$ \!\!\frace{(8, 11)}{[4, 21]}\!\!$&$ \!\!\frace{(11, 8)}{[7, 19]}\!\!$&$ \!\!\frace{(16,
    1)}{[13, 14]}\!\!$&$ \!\!\frace{(7, 12)}{[12, 15]}\!\!$&$ \!\!\frace{(12, 7)}{[1, 23]}\!\!$&$ \!\!\frace{(3,
    15)}{[11, 16]}\!\!$&$ \!\!\frace{(15, 3)}{[6, 20]}\!\!$&$ \!\!\frace{(6, 13)}{[3, 22]}\!\!$&$ \!\!\frace{(13,
    16)}{[10, 17]}\!\!$\\
    \T\B$\!\!\frace{(0, 17)}{[9, 18]}\!\!$&$ \!\!\frace{(17, 0)}{[5, 21]}\!\!$&$ \!\!\frace{(5, 14)}{[0, 24]}\!\!$&$ \!\!\frace{(14,
    5)}{[8, 19]}\!\!$&$ \!\!\frace{(2, 16)}{[2, 23]}\!\!$&$ \!\!\frace{(16, 2)}{[4, 22]}\!\!$&$ \!\!\frace{(10,
    10)}{[14, 14]}\!\!$&$ \!\!\frace{(4, 15)}{[7, 20]}\!\!$&$ \!\!\frace{(9, 11)}{[13, 15]}\!\!$&$ \!\!\frace{(11,
    9)}{[12, 16]}\!\!$\\
\bottomrule
 \end{tabular}
 \medskip
  \caption{Pairs of integers $(m,n)$ giving the first $200$ eigenvalues $\mu_j$ along with pairs $[m,n]$ giving the first $200$ symmetric eigenvalues $\mu_j^s$, for an equilateral triangle. The index $j$ increases from $1$ to $10$ across the first row, and so on.}
  \label{tabledata}
\end{table}

\begin{lemma} \label{explicit}
For $j=3,6$, and for $10 \leq j \leq 200$, we have
\begin{equation} \label{appeq1}
(\mu_2^s+\cdots+\mu_j^s) > \frac{11}{6} (\mu_2+\cdots+\mu_j) .
\end{equation}
For $j=4,5,7,8,9$, we have a weaker inequality,
\begin{equation} \label{appeq2}
(\mu_2^s+\cdots+\mu_j^s) \geq \frac{8}{5} (\mu_2+\cdots+\mu_j) ,
\end{equation}
with equality for $j=4$ and strict inequality for $j=5,7,8,9$.
\end{lemma}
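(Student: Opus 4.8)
The plan is to treat Lemma~\ref{explicit} as an exact finite computation with the explicit equilateral spectrum, organized around Table~\ref{tabledata}. First I would clear the common scale factor: since $\sigma_{m,n} = \frac{16\pi^2}{9}\,q(m,n)$ with $q(m,n) = m^2 + mn + n^2$, this factor divides out of both sides of \eqref{appeq1} and \eqref{appeq2}, so it suffices to prove the inequalities after replacing each eigenvalue by the corresponding integer value of $q$. In these units the full spectrum is the increasing rearrangement of $\{q(m,n) : m,n \geq 0\}$, and the symmetric spectrum is the increasing rearrangement of the sublist with $m \leq n$; in particular $\mu_1 = \mu_1^s = 0$.

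Next I would read the two ordered integer lists directly from Table~\ref{tabledata}, whose $(m,n)$-entries encode the full ordering and whose $[m,n]$-entries encode the symmetric ordering for $1 \leq j \leq 200$. Converting each listed pair to $q(m,n)$ and accumulating gives the partial sums $S_j = \sum_{k=2}^j \mu_k$ and $S_j^s = \sum_{k=2}^j \mu_k^s$. A key simplification is that these partial sums depend only on the \emph{multiset} of the smallest $j$ values, so I need not commit to any tie-breaking rule among equal values of $q$ (only the correct multiplicities matter). For instance one finds $S_4 = 1+1+3 = 5$ and $S_4^s = 1+3+4 = 8$, which gives the exact identity $S_4^s = \frac{8}{5}S_4$ responsible for the equality case at $j=4$.

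Finally, the lemma reduces to a direct comparison of the two sequences: for $j=3,6$ and $10 \leq j \leq 200$ I would verify $S_j^s > \frac{11}{6} S_j$, and for $j=4,5,7,8,9$ verify $S_j^s \geq \frac{8}{5} S_j$ with equality only at $j=4$. I expect the main obstacle to be arithmetic robustness rather than any conceptual difficulty. Several comparisons are razor-thin --- most notably $j=6$, where $S_6^s = 24$ and $S_6 = 13$ give $24 > \frac{11}{6}\cdot 13 = \frac{143}{6} \approx 23.83$ --- so the verification should be carried out in exact rational arithmetic, and one must be careful that Table~\ref{tabledata} captures every value with its correct multiplicity up to the cutoff $j=200$ (the representation count of a single Loeschian number can exceed $2$, as for $q=49$). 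The two-tier structure, with the weaker constant $\frac{8}{5}$ reserved for $j=4,5,7,8,9$, is exactly what is forced by these small indices, where the ratio $S_j^s/S_j$ falls below $\frac{11}{6}$; establishing that these five are the only such indices in the stated range is the substantive content of the computation.
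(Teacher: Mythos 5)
Your proposal is correct and is essentially the paper's own proof: the paper likewise computes the first $200$ eigenvalues and symmetric eigenvalues from the $(m,n)$ and $[m,n]$ indices in Table~\ref{tabledata} and checks \eqref{appeq1} and \eqref{appeq2} by direct (exact integer) calculation, differing only in that it notes the labor-saving shortcut that $\mu_j^s > \frac{11}{6}\mu_j$ termwise for $28 \leq j \leq 200$. Your spot checks (e.g.\ $S_4^s = 8 = \frac{8}{5}\cdot 5$ for the equality case and the tight margin $24 > \frac{143}{6}$ at $j=6$) and your caution about Loeschian multiplicities are accurate.
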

\begin{proof}[Proof of Lemma~\ref{explicit}]
Begin by computing the first $200$ eigenvalues $\mu_j$ and symmetric eigenvalues $\mu_j^s$, using the indices $m$ and $n$ listed in Table~\ref{tabledata}. Estimates \eqref{appeq1} and \eqref{appeq2} can then easily be checked. As a shortcut for \eqref{appeq1}, one can verify that $\mu_j^s > \frac{11}{6} \mu_j$ whenever $28 \leq j \leq 200$, so that \eqref{appeq1} holds for $28 \leq j \leq 200$ as soon as the case $j=27$ has been checked.
\end{proof}

\section*{Acknowledgments}
We acknowledge support from the National Science Foundation grant
DMS 08-38434 ``EMSW21-MCTP: Research Experience for Graduate Students''. We also thank Bart{\l}omiej Siudeja for suggesting that we investigate Neumann eigenvalues.


\begin{thebibliography}{9}

\bibitem{AC10} B. Andrews and J. Clutterbuck. \emph{Proof of the fundamental gap conjecture}.
\url{arXiv:1006.1686v1}

\bibitem{AF08} P. Antunes and P. Freitas. \emph{A numerical study of the spectral gap}, J. Phys. A  41 (2008), 055201, 19 pp.

\bibitem{A99}
M. S. Ashbaugh. \emph{Isoperimetric and universal inequalities for
eigenvalues.} In: Spectral theory and geometry (Edinburgh, 1998),
95--139, London Math. Soc. Lecture Note Ser., 273, Cambridge Univ.
Press, Cambridge, 1999.

\bibitem{B80} C. Bandle. Isoperimetric Inequalities and
Applications. Pitman, Boston, Mass., 1979.

\bibitem{BB99}
R. Ba\~{n}uelos and K. Burdzy. \emph{On the ``hot spots'' conjecture of J. Rauch}, J. Funct. Anal. 164 (1999), 1--33.

\bibitem{He06} A. Henrot. Extremum Problems for Eigenvalues of Elliptic Operators. Frontiers in Mathematics. Birkh\"{a}user Verlag, Basel, 2006.

\bibitem{K85}
B. Kawohl. Rearrangements and convexity of level sets in PDE.
Lecture Notes in Mathematics, 1150. Springer-Verlag, Berlin, 1985.

\bibitem{K06} S. Kesavan. Symmetrization \& Applications.
Series in Analysis, 3. World Scientific Publishing, Hackensack, NJ, 2006.

\bibitem{K94}
P. Kr\"{o}ger. \emph{Estimates for sums of eigenvalues of the Laplacian}, J. Funct. Anal. 126 (1994), 217--227.

\bibitem{L66}
M. G. Lam\'{e}. Le\c{c}ons sur la Th\'{e}orie Math\'{e}matique de L'\'{E}lasticit\'{e} des Corps Solides. Deuxi\`{e}me \'{e}dition. Gauthier--Villars, Paris, 1866.


\bibitem{LS10}
  R. S. Laugesen and B. A. Siudeja.
  \emph{Minimizing Neumann fundamental tones of triangles: an optimal Poincar\'{e} iequality}, J. Differential Equations, 249 (2010), 118--135.

\bibitem{LS11a}
  R. S. Laugesen and B. A. Siudeja.
  \emph{Sums of Laplace eigenvalues - rotationally symmetric maximizers in the plane}. J. Funct. Anal. 260 (2011), 1795--1823.

\bibitem{LS11}
  R. S. Laugesen and B. A. Siudeja.
  \emph{Dirichlet eigenvalue sums on triangles are minimal for equilaterals}. Preprint. \url{arXiv:1008.1316}.

\bibitem{MW70}
  J. Mathews and R. L. Walker.
  Mathematical Methods of Physics. Second edition. W. A. Benjamin, New York, 1970.

\bibitem{M02}
B. J. McCartin. \emph{Eigenstructure of the equilateral triangle. II.
The Neumann problem.} Math. Probl. Eng. 8 (2002), 517--539.

\bibitem{PW60}
L. E. Payne and H. F. Weinberger.
\emph{An optimal Poincar\'{e} inequality for convex domains},
Arch. Rational Mech. Anal. 5 (1960), 286--292.

\bibitem{Pi85}
  M. A. Pinsky.
  \emph{Completeness of the eigenfunctions of the equilateral triangle},
  SIAM J. Math. Anal. 16 (1985), 848--851.

\bibitem{PS51}
G. P\'{o}lya and G. Szeg\H{o}. Isoperimetric Inequalities in
Mathematical Physics. Princeton University Press, Princeton, New
Jersey, 1951.

\end{thebibliography}
\end{document}